\newtheorem{definition}{Definition}
\newtheorem{theorem}{Theorem}
\newtheorem{lemma}{Lemma}
\newtheorem{proposition}{Proposition}
\newtheorem{corollary}{Corollary}
\def\eps{\varepsilon}
\def\al{\alpha}
\def\Ga{\Gamma}
\def\La{\Lambda}
\def\la{\lambda}
\def\kappa{\varkappa}
\def\si{\sigma}
\def\C{{\mathbb C}}
\def\N{{\mathbb N}}
\def\Y{\mathbb Y}
\def\P{\mathbb P}
\def\YS{\operatorname{YS}}
\def\SY{{\cal S}^{\rm Y}}
\def\S{{\mathfrak S}}
\def\codim{\operatorname{codim}}
\def\sgn{\operatorname{sgn}}
\def\id{\operatorname{id}}
\def\Br{\operatorname{Br}}
\def\Stab{\operatorname{Stab}}
\newcommand{\A}{{\mathcal{A}}}
\def\beq{\begin{equation}}
\def\eeq{\end{equation}}
\def\bea{\begin{eqnarray*}}
\def\eea{\end{eqnarray*}}
\def\Reg{\operatorname{Reg}}
\def\Ind{\operatorname{Ind}}
\def\Poin{\operatorname{Poin}}
\vershik\url{avershik@pdmi.ras.ru}
\natalia\url{natalia@pdmi.ras.ru}
\yuz\url{yuz@uoregon.edu}
\author{N.~V.~Tsilevich\thanks{%
St.~Petersburg Department of Steklov Institute of Mathematics and
St.~Petersburg State University, St.~Petersburg, Russia.
E-mail: \natalia. Supported by the RFBR grant 17-01-00433.}
\and A.~M.~Vershik\thanks{%
St.~Petersburg Department of Steklov Institute of Mathematics and
St.~Petersburg State University, St.~Petersburg, Russia; Institute for Information Transmission Problems, Moscow, Russia.t
E-mail: \vershik. Supported by the RFBR grant 17-01-00433.}
\and S.~Yuzvinsky\thanks{University of Oregon, Eugene OR, USA. E-mail: \yuz.}}
\title{The intrinsic hyperplane arrangement in an arbitrary irreducible representation of the symmetric group}
\date{}
\begin{document}
\maketitle

\begin{abstract}
For every irreducible complex representation~$\pi_\lambda$ of the symmetric group~$\S_n$, we construct, in a canonical way, a so-called intrinsic hyperplane arrangement~$\A_{\lambda}$ in the space of~$\pi_\lambda$.
This arrangement is a direct generalization of the classical braid arrangement (which is the special case of our construction corresponding to the natural representation of~$\S_n$), has a natural description in terms of invariant subspaces of Young subgroups, and enjoys a number of remarkable properties.
\end{abstract}

\section{Introduction}

In this paper, for an arbitrary irreducible complex representation~$\pi_\la$ of the symmetric group~$\S_n$ we construct an arrangement of hyperplanes  in the space of~$\pi_\la$. This arrangement, which we have called ``intrinsic,'' is defined canonically in representation-theoretic terms. In the case of the natural representation of~$\S_n$, it coincides with the so-called braid arrangement, studied, in particular, by Arnold~\cite{Arnold} in connection with the cohomology of the group of colored braids. An attempt to generalize Arnold's construction to other irreducible representations of symmetric groups has led us to quite dissimilar arrangements, whose complements, in particular, are not~$K(\pi,1)$ spaces.

Let us recall the main result of~\cite{Arnold}.
Consider the natural representation of the symmetric group $\S_n$  by permutations of coordinates in~$\C^n$.
For any distinct $i,j\in\{1,\ldots,n\}$, let $H_{ij}=\{z\in V: z_i=z_j\}$ be the set of fixed points of
the transposition $(ij)\in\S_n$; obviously, $H_{ij}$ is a hyperplane in~$V$ and $(ij)$ acts as the reflection with respect to this hyperplane (``mirror''). The collection of the ${n\choose 2}$ mirrors~$H_{ij}$ is called the {\it braid arrangement}~$\Br_n$ of hyperplanes. Let $M$ be the complement to all these mirrors. Arnold~\cite{Arnold} proved that  $M$ is a $K(\pi,1)$ space with $\pi$ being the group of  colored braids, whence
the cohomology ring~$H^*(M)$ of~$M$ is isomorphic to the cohomology ring of the group of colored braids. He also proved that the Poincar\'e polynomial of~$M$ is equal to
$\Poin(M,t)=(1+t)(1+2t)\ldots(1+(n-1)t)$.

Arnold's results aroused much interest and were generalized in several directions. For instance, Brieskorn~\cite{Brieskorn}
proved that the ring $H^*(M)$ for an arbitrary
arrangement of hyperplanes
is isomorphic to the ring generated by all the  logarithmic differential forms
$\frac {d\alpha}{\alpha}$
where $\alpha=0$ is the equation of a~hyperplane. Then Orlik and Solomon~\cite{OS} showed that this ring
is determined just by the intersection lattice of the arrangement (see~\cite{OT}).
There are also results for the case where  the permutation group~$\S_n$ acts on~$M$, and
hence on~$H^*(M)$. It was proved in~\cite{Lehrer} that for the braid arrangement in~$\C^n$
and the natural action of
$\S_n$, the module~$M$  is isomorphic
 to $2\Ind_{\S_2}^{\S_n}\id_2$, where $\id_2$ is the identity representation of~$\S_2$. It is important to note that the braid arrangement and its direct generalizations to other series of Coxeter groups have another description: their hyperplanes are the mirrors (sets of fixed points) of elements of some finite reflection groups. The arrangements introduced in this paper do not, in general, have this property.

We suggest a development of the described framework in quite another direction, replacing the simplest natural representation of the symmetric group  by other irreducible representations. Here, the naive approach, obviously, fails, since the set of fixed points of a transposition for a general irreducible representation is no longer a hyperplane. However, an analysis from the point of view of the representation theory of~$\S_n$ suggests the correct approach.

The main result of the paper says that {\it for every irreducible complex representation~$\pi_\lambda$ of the symmetric group~$\S_n$ there exists a~canonical ``intrinsic'' hyperplane arrangement~$\A_{\lambda}$ in the space~$V_\la$ of this representation.}
This arrangement is a direct generalization of the braid arrangement, has a natural description in terms of invariant subspaces of Young subgroups, and enjoys a number of remarkable properties.

There are several constructions of this intrinsic arrangement. For instance, one can consider the space of the representation of~$\S_n$ induced from some ``generalized Young subgroup'' (a product of wreath products of symmetric groups associated in a natural way with~$\la$) containing~$\pi_\la$. Then $\A_{\lambda}$
is the intersection of the coordinate (Boolean) arrangement in the space of this induced representation with the subspace corresponding to~$\pi_\la$  (see Theorem~\ref{th:tensor}).

An important property of the intrinsic arrangement~$\A_{\lambda}$ is that the collection of unit normal vectors to its hyperplanes is an orbit of an action of the symmetric group; the convex hull of this orbit is an important polytope.

The general theory of hyperplane arrangements is the subject of much research (see~\cite{OT}).
From the point of view of this theory and the theory of lattices, our examples are, as far as we know, new.\footnote{When this article was under preparation, we have come upon
 the paper~\cite{WWZ} in which closely related objects are considered.
 However, the approach in~\cite{WWZ} is completely different,
in that, first, no hyperplane arrangements are considered, and, second, the definitions are given just in a purely combinatorial form, while our constructions are systematically defined in invariant representation-theoretic terms.} Their main feature and the key point of our approach is that we consider arrangements and lattices invariant under an action of~$\S_n$ and use this invariance and the representation theory of~$\S_n$ to analyze them.  Although there are some papers related to group actions on hyperplane arrangements and lattices, they deal with quite different questions and within a~quite different approach.
Chapter~6 of~\cite{OT} is devoted to arrangements consisting of the fixed hyperplanes of reflections of finite reflection groups;  while
our arrangements are not generated by finite reflection groups, i.e., the groups generated by the  reflections at the hyperplanes are infinite (at least for nontrivial hook diagrams, see
Sec.~\ref{sec:hook}). This poses an independent problem of studying such groups.

To describe the intersection lattice of~$\A_\la$ for an arbitrary irreducible representation~$\pi_\la$ of~$\S_n$ is a quite difficult problem. We consider in detail the simplest nontrivial case, corresponding to hook diagrams of the form~$\la=(n-k,1^k)$.  Here we can already observe important differences with the Arnold case $k=1$. In particular, the complement to~$\A_\la$ in the case $k>1$ is no longer~$K(\pi,1)$, and its fundamental group is Abelian (see Theorem~\ref{th:hookcompl}).

\smallskip

For a background on hyperplane arrangements and related objects, we refer the reader to~\cite{OT}, for that on the classical representation theory of symmetric groups, to~\cite{JK}, and for that on symmetric functions, to~\cite{Mac}.

\smallskip
The paper is organized as follows. In Sec.~\ref{sec:first}, we present our main construction of a hyperplane arrangement~$\A_\la$ canonically associated with an irreducible representation~$\pi_\la$ of~$\S_n$. Section~\ref{sec:tensor} describes an alternative construction of this arrangement as the intersection of the coordinate arrangement in a larger space with the subspace corresponding to~$\pi_\la$. In Sec.~\ref{sec:hook}, we study in detail the hyperplane arrangements corresponding to hook diagrams. Finally, in Sec.~\ref{sec:partlat} we study a natural join homomorphism from the partition lattice~$\Pi_n$ to the lattice of invariant subspaces of Young subgroups, which, in particular, provides a natural characterization of the intrinsic arrangement~$\A_\la$.

\section{The intrinsic hyperplane arrangement associated with an irreducible representation of the symmetric group}\label{sec:first}

First, we introduce necessary notation.

For $n\in\N$, let $\Y_n$ be the set of partitions of~$n$ (or, which is the same, the set of Young diagrams with $n$ cells) and $\P_n$ be the set of partitions of the set $[n]=\{1,\ldots,n\}$. For $\mu\in\Y_n$, we say that a partition $\al\in\P_n$ is of type~$\mu$ if the sizes of the blocks of~$\al$ form the partition~$\mu$ of~$n$; we denote by $\P_n(\mu)$ the set of all partitions $\al\in\P_n$ of type~$\mu$.

For $\la\in\Y_n$, let $\pi_\la$ be the irreducible representation of~$\S_n$ with diagram~$\la$ and $V_\la$ be the space of this representation.

For a set $A$, denote by $\S[A]$ the group of permutations of its elements. Given a partition $\al\in\P_n$ with blocks $A_1,\ldots,A_k$, let $\S_\al=\S[A_1]\times\ldots\times \S[A_k]$ be  the corresponding Young subgroup in $\S_n$. If $\al$ is of type~$\mu$, we say that $\S_\al$ is of type~$\mu$.

Given a set $T$ of transpositions in~$\S_n$, consider the graph~$\Ga(T)$ on the set of vertices $[n]$ in which vertices $i$ and $j$ are connected by an edge if and only if the transposition $(ij)$ lies in~$T$. The partition of~$\Ga(T)$ into connected components determines a partition~$\al_T\in\P_n$, which in turn determines a~Young subgroup~$\S_T:=\S_{\al_T}$.

For an arbitrary element $\si\in\S_n$ or a subgroup $G\subset\S_n$, we denote
$$
V_\la^\si=\{v\in V_\la:\pi_\la(\si) v=v\},\qquad V_\la^G=\{v\in V_\la:\pi_\la(g) v=v\text{ for all }g\in G\}.
$$

The following observation is easy to prove.

\begin{lemma}
Given a collection~$T$ of transpositions in~$\S_n$,  let $V_\la^T=\cap_{\tau\in T}V^\tau_\la$. Then
 ${V_\la^T=V_\la^{\S_T}}$.
\end{lemma}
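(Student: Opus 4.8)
The plan is to reduce the statement to a purely group-theoretic fact about which subgroup of~$\S_n$ is generated by the transpositions in~$T$. The bridge between the two sides of the asserted equality is the elementary observation that, for any fixed vector $v\in V_\la$, the set $\Stab(v)=\{g\in\S_n:\pi_\la(g)v=v\}$ is a subgroup of~$\S_n$: it contains the identity, and since $\pi_\la$ is a homomorphism it is closed under products and inverses. Consequently $v\in V_\la^T$ means precisely $T\subseteq\Stab(v)$, and because $\Stab(v)$ is a subgroup, this is equivalent to $\langle T\rangle\subseteq\Stab(v)$, where $\langle T\rangle$ denotes the subgroup generated by~$T$. Likewise $v\in V_\la^{\S_T}$ means $\S_T\subseteq\Stab(v)$. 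So once I know $\langle T\rangle=\S_T$, both conditions coincide and the lemma follows.

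First I would dispose of the easy inclusion $V_\la^{\S_T}\subseteq V_\la^T$. Each $\tau=(ij)\in T$ joins $i$ and $j$ by an edge of~$\Ga(T)$, hence places them in the same connected component, i.e.\ in the same block of~$\al_T$; therefore $\tau\in\S_T$, and any vector fixed by all of~$\S_T$ is in particular fixed by every $\tau\in T$.

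For the reverse inclusion the crux is to identify $\langle T\rangle$ with~$\S_T$. Here I would invoke the standard fact that the transpositions associated with the edges of a connected graph on a vertex set~$A$ generate the full symmetric group~$\S[A]$; this is proved by choosing a spanning tree and inducting on~$|A|$. Since the blocks $A_1,\ldots,A_k$ of~$\al_T$ are by definition exactly the connected components of~$\Ga(T)$, the transpositions of~$T$ supported inside a given block~$A_m$ form a connected graph on~$A_m$ and hence generate $\S[A_m]$; transpositions in distinct blocks have disjoint supports and commute, so together they generate the direct product $\langle T\rangle=\S[A_1]\times\cdots\times\S[A_k]=\S_T$.

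Combining these observations gives the chain of equivalences $v\in V_\la^T\iff T\subseteq\Stab(v)\iff\langle T\rangle\subseteq\Stab(v)\iff\S_T\subseteq\Stab(v)\iff v\in V_\la^{\S_T}$, which is exactly the assertion. I expect the spanning-tree generation statement $\langle T\rangle=\S_T$ to be the only step requiring genuine argument, and it is entirely classical; everything else is formal manipulation with stabilizer subgroups.
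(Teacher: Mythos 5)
Your proof is correct. The paper offers no argument at all for this lemma (it is introduced only with ``The following observation is easy to prove''), and what you have written is precisely the standard argument being alluded to: the reduction via the fact that $\Stab(v)=\{g\in\S_n:\pi_\la(g)v=v\}$ is a subgroup, together with the classical spanning-tree fact that the transpositions attached to the edges of a connected graph on a vertex set~$A$ generate all of~$\S[A]$, so that $\langle T\rangle=\S_T$ block by block.
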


\begin{lemma}\label{l:kostka}
Let $\S_\al$ be a Young subgroup in~$\S_n$ of type~$\mu$. Then
$$
\dim V_\la^{\S_\al}=K_{\la\mu},
$$
where $K_{\la\mu}$ is the Kostka number.
\end{lemma}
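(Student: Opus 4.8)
The plan is to recognize $\dim V_\la^{\S_\al}$ as a multiplicity and then reduce it to a standard decomposition of permutation modules. Since we work over~$\C$ and $\S_\al$ is finite, the restriction $V_\la|_{\S_\al}$ is completely reducible, so the dimension of the invariant subspace is exactly the multiplicity of the trivial representation~$\id_{\S_\al}$ in this restriction:
\[
\dim V_\la^{\S_\al}=\dim\operatorname{Hom}_{\S_\al}(\id_{\S_\al},\,V_\la|_{\S_\al}).
\]
I would then push this computation up to~$\S_n$ by Frobenius reciprocity,
\[
\operatorname{Hom}_{\S_\al}(\id_{\S_\al},\,V_\la|_{\S_\al})\cong\operatorname{Hom}_{\S_n}\bigl(\Ind_{\S_\al}^{\S_n}\id_{\S_\al},\,V_\la\bigr),
\]
so that the claim becomes a statement about the multiplicity of~$V_\la$ in an induced module.

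The key observation is that $\Ind_{\S_\al}^{\S_n}\id_{\S_\al}$ is precisely the Young permutation module~$M^\mu$. Indeed, since $\S_\al$ is of type~$\mu$ it is conjugate in~$\S_n$ to the standard Young subgroup~$\S_\mu$, conjugate subgroups yield isomorphic induced representations, and $\Ind_{\S_\mu}^{\S_n}\id$ is by definition~$M^\mu$. Young's rule (see~\cite{JK}) then gives the decomposition $M^\mu\cong\bigoplus_{\nu}K_{\nu\mu}V_\nu$ into irreducibles, in which the multiplicity of~$V_\nu$ is the Kostka number~$K_{\nu\mu}$. Because $V_\la$ is irreducible, Schur's lemma yields $\dim\operatorname{Hom}_{\S_n}(M^\mu,V_\la)=K_{\la\mu}$, and chaining the displayed equalities gives $\dim V_\la^{\S_\al}=K_{\la\mu}$.

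The argument is essentially formal once Young's rule is granted; the only genuine content is the identity expressing the multiplicity of~$V_\la$ in~$M^\mu$ as~$K_{\la\mu}$, which is the classical combinatorial description of the decomposition of Young permutation modules. I do not anticipate a serious obstacle, but the one point requiring care is the passage from the abstract Young subgroup~$\S_\al$ attached to the set partition~$\al$ to the standard subgroup~$\S_\mu$: this is exactly where the hypothesis ``of type~$\mu$'' is used, and it is legitimate precisely because all Young subgroups of a fixed type are conjugate, so the induced modules, and hence the relevant $\operatorname{Hom}$-space dimensions, coincide.
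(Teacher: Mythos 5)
Your proposal is correct and follows essentially the same route as the paper: identify $\dim V_\la^{\S_\al}$ as the multiplicity of the trivial representation in the restriction, apply Frobenius reciprocity to pass to $\Ind_{\S_\al}^{\S_n}\id_{\S_\al}$, and then identify the multiplicity of $\pi_\la$ there as $K_{\la\mu}$. The only (cosmetic) difference is the final citation: you invoke Young's rule for the permutation module $M^\mu$, while the paper phrases the same classical fact through the Frobenius characteristic map and the symmetric-function identity $\langle s_\la,h_\mu\rangle=K_{\la\mu}$.
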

\begin{proof}
The dimension $\dim V_\la^{\S_\al}$ is the multiplicity of the identity representation~$\operatorname{id}_{\S_\al}$ in the restriction of~$\pi_\la$ to~$\S_\al$:
$$
\dim V_\la^{\S_\al}=\langle\operatorname{Res}^{\S_n}_{\S_\al}\pi_\la,\operatorname{id}_{\S_\al}\rangle=\langle\pi_\la,\operatorname{Ind}^{\S_n}_{\S_\al}\operatorname{id}_{\S_\al}\rangle,
$$
where the last equality  is Frobenius reciprocity. Now, using the theory of symmetric functions and observing that the Frobenius map sends $\operatorname{Ind}^{\S_n}_{\S_\al}\operatorname{id}_{\S_\al}$ to the complete symmetric function~$h_\mu$, we obtain
$$
\dim V_\la^{\S_\al}=\langle s_\la,h_\mu\rangle=K_{\la\mu}
$$
(where $s_\la$ is a Schur symmetric function).
\end{proof}

Note that in the case of the standard representation, i.e., $\la=(n-1,1)$, for a single transposition~$\tau\in\S_n$ we have $\dim V_\la^\tau=n-2=\dim V_\la-1$, that is, the invariant subspace of~$\tau$ is a hyperplane, the collection of all such hyperplanes being exactly the braid arrangement $\Br_n$. The correct way of extending this construction to an arbitrary irreducible representation is suggested by the following lemma.

Using the notation and terminology of the theory of lattices, for subspaces~$V_1$ and~$V_2$ of~$V_\la$ we denote by $V_1\vee V_2$ and $V_1\wedge V_2$ their join and meet in the sense of the lattice of subspaces, i.e., $V_1+V_2$ and $V_1\cap V_2$, respectively.

\begin{lemma}\label{l:hyper}
Let $\la\in\Y_n$ and $\la'$ be the Young diagram conjugate to~$\la$. Given a partition $\al\in\P_n(\la')$, consider
an arbitrary collection~$T$ of transpositions in~$\S_n$ generating the Young subgroup~$\S_\al$. Let $H_T=\bigvee_{\tau\in T}V^\tau_\la$. Then $H_T$ depends only on~$\al$, and
$\codim H_T=1$.
\end{lemma}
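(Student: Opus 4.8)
The plan is to \emph{dualize}: rather than work directly with the sum $H_T=\bigvee_{\tau\in T}V^\tau_\la$, I would pass to its orthogonal complement with respect to an $\S_n$-invariant Hermitian inner product on $V_\la$ (which exists since $\S_n$ is finite). With respect to such an inner product each $\pi_\la(\tau)$ is a unitary involution, so $V_\la$ splits orthogonally into the $(+1)$-eigenspace $V_\la^\tau$ and the $(-1)$-eigenspace, which I denote $W_\la^\tau$. Taking complements converts sums into intersections, whence
$$
\codim H_T=\dim H_T^\perp=\dim\bigcap_{\tau\in T}(V_\la^\tau)^\perp=\dim\bigcap_{\tau\in T}W_\la^\tau .
$$
It therefore suffices to show that the intersection of the sign-eigenspaces of the generating transpositions is one-dimensional and depends only on~$\al$.

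The key step is to identify this intersection representation-theoretically. I claim that
$$
\bigcap_{\tau\in T}W_\la^\tau=\{\,v\in V_\la:\pi_\la(g)v=\sgn(g)\,v\ \text{for all }g\in\S_\al\,\},
$$
the isotypic subspace of the sign character in $\operatorname{Res}^{\S_n}_{\S_\al}\pi_\la$. One inclusion is immediate. For the other, suppose $\pi_\la(\tau)v=-v$ for every $\tau\in T$; since $T$ generates $\S_\al$ and each $\tau$ is a transposition with $\sgn(\tau)=-1$, writing an arbitrary $g\in\S_\al$ as a product of elements of~$T$ and using multiplicativity of $\pi_\la$ yields $\pi_\la(g)v=\sgn(g)\,v$. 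This identification is exactly what delivers the first assertion of the lemma: the right-hand side is manifestly intrinsic to the subgroup~$\S_\al$ (equivalently, to~$\al$) and makes no reference to the chosen generating set~$T$, so $H_T^\perp$, and hence $H_T$, depends only on~$\al$.

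It then remains to compute the dimension, and here I would follow the proof of Lemma~\ref{l:kostka} verbatim, with the sign character in place of the identity. By Frobenius reciprocity,
$$
\dim\bigcap_{\tau\in T}W_\la^\tau=\langle\operatorname{Res}^{\S_n}_{\S_\al}\pi_\la,\sgn_{\S_\al}\rangle=\langle\pi_\la,\operatorname{Ind}^{\S_n}_{\S_\al}\sgn_{\S_\al}\rangle .
$$
Passing to symmetric functions, the Frobenius characteristic of $\operatorname{Ind}^{\S_n}_{\S_\al}\sgn_{\S_\al}$ is the elementary symmetric function $e_\mu$, where $\mu=\la'$ is the type of~$\S_\al$; indeed $e_\mu$ is the image of $h_\mu$ under the standard involution $\omega$ (which sends $h_k\mapsto e_k$ and $s_\nu\mapsto s_{\nu'}$). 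Hence the multiplicity equals $\langle s_\la,e_\mu\rangle$, and applying $\omega$ to $h_\mu=\sum_\nu K_{\nu\mu}s_\nu$ gives $e_\mu=\sum_\nu K_{\nu'\mu}s_\nu$, so that $\langle s_\la,e_\mu\rangle=K_{\la'\mu}=K_{\la'\la'}$.

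Finally, the diagonal Kostka number $K_{\la'\la'}$ equals~$1$, since the only semistandard tableau of shape~$\nu$ and content~$\nu$ is the one whose $i$-th row is filled entirely with~$i$'s. Therefore $\codim H_T=1$, and together with the second paragraph this establishes both claims. The one genuinely substantive point is the eigenspace/isotypic identification above, which simultaneously forces one-dimensionality (through $K_{\la'\la'}=1$) and the independence from~$T$; the symmetric-function bookkeeping that follows is routine and precisely dual to that of Lemma~\ref{l:kostka}.
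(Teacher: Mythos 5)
Your proof is correct and takes essentially the same route as the paper's: both pass to the orthogonal complement $H_T^\perp=\bigcap_{\tau\in T}(V_\la^\tau)^\perp$, identify this intersection of $(-1)$-eigenspaces with the sign-isotypic subspace of $\operatorname{Res}^{\S_n}_{\S_\al}\pi_\la$ (which is what gives independence of~$T$), and then compute its dimension by Frobenius reciprocity and symmetric functions as $\langle s_\la,e_{\la'}\rangle=K_{\la'\la'}=1$. You merely spell out two steps the paper leaves implicit --- the verification that the common $(-1)$-eigenspace of the generators is the full sign-isotypic subspace for~$\S_\al$, and the $\omega$-involution bookkeeping behind $\langle s_\la,e_\mu\rangle=K_{\la'\mu}$ --- so there is nothing to correct.
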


\begin{proof}
We have
$H_T^\perp=\bigwedge\limits_{\tau\in T}(V^\tau_\la)^\perp$. Now, $(V^\tau_\la)^\perp$ is the eigenspace of the action of~$\tau$ in $V_\la$ with eigenvalue~$-1$, hence $\bigwedge\limits_{\tau\in T}(V^\tau_\la)^\perp$ is the subspace of the sign representation of the Young group~$\S_T=\S_\al$ generated by~$T$. Therefore, the codimension in question is equal to the multiplicity of the sign representation~$\sgn_{\S_T}$ of~$\S_T$ in the restriction of~$\pi_\la$ to~$\S_T$. Thus, similarly to the proof of Lemma~\ref{l:kostka}, we have
\begin{eqnarray*}
\codim H_T&=&\langle\operatorname{Res}^{\S_n}_{\S_T}\pi_\la,\operatorname{sgn}_{\S_T}\rangle
=\langle\pi_\la,\operatorname{Ind}^{\S_n}_{\S_T}\operatorname{sgn}_{\S_T}\rangle\\
&=&\langle s_\la,e_\mu\rangle=\langle s_{\la'},h_\mu\rangle=K_{\la',\mu},
\end{eqnarray*}
where $\mu$ is the type of~$T$. By assumption, $\mu=\la'$, and $K_{\la',\la'}=1$ by the well-known property of Kostka numbers, so the lemma follows.
\end{proof}

Thus, we arrive at the following construction/definition.

\begin{definition}\label{def:main}
{\rm Let $\la\in\Y_n$. For every partition~$\al\in\P_n(\la')$, consider a~collection of transpositions~$T_\al$ generating the Young subgroup~$\S_\al$ and let
$$
H_\al:=\bigvee_{\tau\in T_\al}V^\tau_\la.
$$
By Lemma~\ref{l:hyper}, the subspace~$H_\al$ does not depend on the choice of~$T_\al$ and
is a~hyperplane in~$V_\la$. Thus, the set ${\cal A}_\la:=\{H_\al\}_{\al\in\P_n(\la')}$ is a hyperplane arrangement in~$V_\la$, which will be called the {\it intrinsic arrangement associated with the irreducible representation}~$\pi_\la$ of the symmetric group~$\S_n$.}
\end{definition}

There is a natural transitive action of the symmetric group~$\S_n$ on the set~$\P_n(\la')$.
For ${\al\in\P_n(\la')}$, denote by $n_\al$ a unit normal vector to the hyperplane~$H_\al$ (defined up to a multiplicative constant). Clearly, one can choose the multiplicative constants in such a way that $n_{\si(\al)}=\si(n_\al)$ for every $\al\in\P_n(\la')$ and every $\si\in\S_n$. Thus, {\it the set of normals $\{n_\al\}_{\al\in\P_n(\la')}$ to the hyperplanes of~$\cal A_\la$ is an orbit of~$\S_n$}.

The convex hull of the set of normals  $\{n_\al\}_{\al\in\P_n(\la')}$ is an important polytope (in another form it appeared in~\cite{WWZ} under the name of ``Specht polytope''). For instance, in the case $\la=(n-1,1)$ of the braid arrangement, it is the root polytope of type~$A_n$.

Among the normals $n_\al$ there is a distinguished one in the following sense. Consider the Gelfand--Tsetlin basis $\{e_t\}$ in~$V_\la$, indexed by the standard Young tableaux~$t$ of shape~$\la$ (see, e.g.,~\cite{OV}).
Then there is a unique normal $n_\al$ that coincides (up to a multiplicative constant) with an element of the Gelfand--Tsetlin basis. Namely, let $\la'=(l_1,l_2,\ldots,l_k)$, and let
$$
\al_0=\big\{\{1,\ldots,l_1\},\{l_1+1,\ldots,l_1+l_2\},\ldots,\{l_1+\ldots+l_{k-1}+1,\ldots,n\}\big\}\in\P_n(\la').
$$
Also, consider the ``minimal'' Young tableau $\tau^{\min}_\la$ of shape~$\la$ obtained by successively inserting the numbers $1,2,\ldots,n$ into the first column of~$\la$ from top to bottom, then into the second column of~$\la$ from top to bottom, etc. Then $n_{\al_0}$ coincides with $e_{\tau^{\min}_\la}$ (up to a multiplicative constant), and all the other normals are the elements of the orbit of~$e_{\tau^{\min}_\la}$ under the action of~$\S_n$. The tableau~$\tau^{\min}_\la$ and all the related objects (the partition~$\al_0$, the normal~$n_{\al_0}$, the hyperplane~$H_{\al_0}$, the Young subgroup~$\S_{\al_0}$) will be called {\it distinguished}.

The hyperplanes of our arrangement can also be described as follows. For the symmetric group~$\S[A]$ on a set~$A$ of cardinality~$N$, denote by $Q_A$ the corresponding antisymmetrizer:
$$
Q_A=\frac1{N!}\sum_{\si\in\S[A]}(-1)^{\sgn\si}\si\in\C[\S_n].
$$

\begin{proposition}
For a partition~$\al\in\P_n(\la')$, denote by $R_\al$ the reflection with respect to the hyperplane~$H_\al$. Then
\begin{equation}\label{reflection}
R_\al:=\pi_\la(1-2Q_{A_1}\ldots Q_{A_k})\in\C[\S_n]
\end{equation}
where $A_1,\ldots,A_k$ are the blocks of~$\al$ and $1$ is the identity element of~$\S_n$.
\end{proposition}

\begin{proof}
Consider the distinguished hyperplane~$H_{\al_0}$. Denoting by $P_{\al_0}$ the orthogonal projection to the corresponding normal~$n_{\al_0}$, we obviously have $R_{\al_0}=1-2P_{\al_0}$. But $n_{\al_0}$ coincides with~$e_{\tau^{\min}_\la}$, and it is not difficult to show that the orthogonal projection to~$e_{\tau^{\min}_\la}$  is exactly
$$\pi_\la(Q_{\{1,\ldots,l_1\}}Q_{\{l_1+1,\ldots,l_1+l_2\}}\ldots Q_{\{l_1+\ldots+l_{k-1}+1,\ldots,n\}}),$$
so \eqref{reflection} is proved for~$H_{\al_0}$. Now, since any other hyperplane of~$\A_\la$ is obtained from~$H_{\al_0}$ by the action of an element~$g\in\S_n$, the corresponding reflection is obtained from~$R_{\al_0}$ by the conjugation by~$\pi_\la(g)$, and the proposition follows.
\end{proof}

The number $\#\cal A_\la$ of hyperplanes in the arrangement~$\cal A_\la$ is equal to the number~$\#\P_n(\la')$ of different partitions of~$[n]$ of type~$\la'$. Since $\S_n$ acts transitively on~$\P_n(\la')$, this number is equal to
$\frac{n!}{\#\Stab(\al)}$ where $\Stab(\al)$ is
the cardinality of the stabilizer of an arbitrary partition $\al\in\P_n(\la')$. Let $\la'=(1^{m_1}2^{m_2}\ldots n^{m_n})$. It is easy to see that this stabilizer is isomorphic to the following subgroup of~$\S_n$:
\begin{equation}\label{wreath}
W_{\la'}=(\S_1\wr\S_{m_1})\times(\S_2\wr\S_{m_2})\times\ldots\times(\S_n\wr\S_{m_n}),
\end{equation}
where $\wr$ stands for wreath product.
Then
$$
\#{\cal A}_\la=\frac{n!}{\#W_{\la'}}=\frac{n!}{\prod_k (k!)^{m_k}m_k!}.
$$

\smallskip\noindent{\bf Example~1. The standard representation (braid arrangement).}
Let $\la=(n-1,1)$. Then $\la'=(21^{n-2})$, so a collection of transpositions of type~$\la'$ is just a transposition~$\tau$, the corresponding Young group $\S_\tau$ is the group (isomorphic to~$\S_2$) generated by this transposition, and the corresponding
hyperplane~$H_\tau$, which is the orthogonal complement to the sign representation of~$\S_\tau$, is just the invariant subspace $H_\tau=\{v: \tau v=v\}$ of~$\tau$. Thus, we obtain the braid arrangement. More exactly, the classical braid arrangement~$\Br_n$ is defined by the same construction in the space~$V_{\rm nat}=\C^n$ of the {\it natural} representation $\pi_{\rm nat}=\pi_{(n-1,1)}+\pi_{(n)}$ of $\S_n$, where the one-dimensional subspace corresponding to the identity representation~$\pi_{(n)}$  belongs to all hyperplanes. Thus, $\Br_n=\widetilde{\Br}_n\times\Phi_1$ where $\Phi_1$ is the empty $1$-arrangement and $\widetilde{\Br}_n$ is an irreducible arrangement which exactly coincides with~$\A_{(n-1,1)}$.

\medskip

\noindent{\bf Example~2.} Let $\la=(2,1^{n-2})$. Denote by $\chi_n$ the characteristic polynomial\footnote{Recall that the characteristic polynomial~$\chi(t)$ of a hyperplane arrangement and its Poincar\'e polynomial~$\Poin(t)$ are related by the formula $\chi(t)=t^\ell\Poin(-t^{-1})$ where $\ell$ is the dimension of the ambient space.} of the arrangement~$\A_\la=\A_{(2,1^{n-2})}$ (that is, the characteristic polynomial of the intersection lattice of~$\A_\la$). In this case, we can calculate~$\chi_n$ explicitly.

\begin{proposition} The characteristic polynomial of $\A_\la$ is given by the formula
$$
\chi_n(t)=(t-1)^{n-1}-(t-1)^{n-2}+\ldots+(-1)^n(t-1)=\frac{t-1}t\big((t-1)^n-(-1)^{n-1}\big).
$$
\end{proposition}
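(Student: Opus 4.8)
The plan is to make the arrangement completely explicit and then recognize its intersection lattice as that of a uniform matroid. First I would fix a convenient model of~$V_\la$. Since $\la=(2,1^{n-2})$ is conjugate to $(n-1,1)$, we have $\pi_\la\cong\sgn\otimes\pi_{(n-1,1)}$, so I realize $V_\la$ on the standard-representation space $W=\{x\in\C^n:\sum_i x_i=0\}$ equipped with the twisted action $\pi_\la(\si)x=\sgn(\si)\,\si(x)$, where $\si(x)$ is the permutation of coordinates. For a transposition $\tau=(jk)$ this gives $\pi_\la(\tau)x=-(jk)x$, so $V_\la^\tau=\{x:(jk)x=-x\}=\C(e_j-e_k)$ is one-dimensional. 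Now $\la'=(n-1,1)$, so a partition $\al\in\P_n(\la')$ is determined by its singleton block~$\{i\}$, with large block $A=[n]\setminus\{i\}$ and $\S_\al=\S[A]$. Taking $T_\al$ to be all transpositions in~$\S[A]$, Definition~\ref{def:main} yields
$$
H_\al=\bigvee_{j,k\in A}\C(e_j-e_k)=\{x\in W:x_i=0\}=:H_i.
$$
Hence $\A_\la=\{H_1,\dots,H_n\}$, where $H_i$ is cut out on~$W$ by the coordinate functional~$x_i$.

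Next I would identify the intersection lattice. The $n$ functionals $x_1|_W,\dots,x_n|_W$ span the $(n-1)$-dimensional space~$W^*$ and satisfy the single relation $\sum_i x_i|_W=0$; thus any $n-1$ of them are independent and the underlying matroid is the uniform matroid~$U_{n-1,n}$. Explicitly, for $S\subseteq[n]$ set $X_S=\bigcap_{i\in S}H_i=\{x\in W:x_i=0\ \text{for }i\in S\}$. A direct check gives $\dim X_S=n-1-|S|$ when $|S|\le n-2$, shows that distinct such~$S$ give distinct flats, and shows $X_S=\{0\}$ once $|S|\ge n-1$. So $L(\A_\la)$ is the truncated Boolean poset $\{S:|S|\le n-2\}$ (ordered by reverse inclusion) with a single adjoined top element $\hat1=\{0\}$ of dimension~$0$.

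Finally I would evaluate $\chi_n(t)=\sum_{X\in L}\mu(\hat0,X)\,t^{\dim X}$. For $|S|\le n-2$ the interval $[\hat0,X_S]$ is Boolean of rank~$|S|$, whence $\mu(\hat0,X_S)=(-1)^{|S|}$; these flats contribute $\sum_{r=0}^{n-2}\binom nr(-1)^r t^{\,n-1-r}$. For the top flat the Möbius recursion gives $\mu(\hat0,\hat1)=-\sum_{r=0}^{n-2}\binom nr(-1)^r=(-1)^{n-1}(n-1)$, using $\sum_{r=0}^n\binom nr(-1)^r=0$. Adding the two contributions and collapsing the main sum by the binomial identity $\sum_{r=0}^n\binom nr(-1)^r t^{\,n-r}=(t-1)^n$, a routine simplification produces the claimed alternating sum $\sum_{r=0}^{n-2}(-1)^r(t-1)^{\,n-1-r}$, equivalently $\tfrac{t-1}{t}\bigl((t-1)^{n-1}-(-1)^{n-1}\bigr)$.

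The only substantive step is the first one: the representation-theoretic identification of each hyperplane~$H_\al$ with a coordinate hyperplane $\{x_i=0\}$ of~$W$, for which the $\sgn$-twisted model does all the work. After that, recognizing the matroid as~$U_{n-1,n}$ and carrying out the Möbius and binomial bookkeeping is routine; one may sanity-check the answer at $n=3$, where it reduces to $\chi_3(t)=(t-1)(t-2)$, the characteristic polynomial of three generic lines through the origin in the plane.
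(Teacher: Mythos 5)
Your proof is correct, but it takes a genuinely different route from the paper's. The paper stays representation-theoretic and recursive: it identifies each hyperplane $H_j$ as the carrier of $\pi_{(2,1^{n-3})}$ inside the restriction of $\pi_\la$ to the Young subgroup $\S_{[n]\setminus\{j\}}$ (via the branching rule $\pi_{(2,1^{n-2})}|_{\S_{n-1}}=\pi_{(1^{n-1})}+\pi_{(2,1^{n-3})}$), observes that an intersection of $k\le n-2$ hyperplanes is again the carrier of a hook representation of a smaller Young subgroup, and then applies deletion--restriction at the distinguished hyperplane $H_n$: the deletion is a Boolean arrangement in dimension $n-1$, the restriction is $\A_{(2,1^{n-3})}$, and this gives the recurrence $\chi_n(t)=(t-1)^{n-1}-\chi_{n-1}(t)$ with $\chi_2(t)=t-1$. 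You instead make the arrangement completely explicit: the sign twist $\pi_{(2,1^{n-2})}\cong\sgn\otimes\pi_{(n-1,1)}$ turns the hyperplanes into the coordinate sections $\{x_i=0\}$ of the sum-zero space $W$, the matroid is then the uniform matroid $U_{n-1,n}$, and the polynomial comes out of a single M\"obius computation on the truncated Boolean lattice. Your route yields strictly more information --- the full intersection lattice $L(\A_{(2,1^{n-2})})$, namely the rank-$(n-1)$ truncation of the Boolean lattice on $[n]$ --- whereas the paper's recursion never needs to name the lattice; conversely, the paper's bookkeeping is the one that feeds into its treatment of general hooks in Section~\ref{sec:hook}, and your sign-twisted model is in effect the coordinate model of Theorem~\ref{th:tensor} specialized to $\la=(2,1^{n-2})$, where $M^{\la'}=\La^{n-1}(\C^n)\cong\C^n\otimes\sgn$. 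All your intermediate claims check: any $n-1$ of the functionals $x_i|_W$ are independent, $\mu(\hat0,\hat1)=(-1)^{n-1}(n-1)$, and your answer agrees with the paper's table at $n=5$ (namely $t^4-5t^3+10t^2-10t+4$ for $\la=(21^3)$).

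One discrepancy should be flagged, and it is in your favor. Your closed form $\frac{t-1}{t}\bigl((t-1)^{n-1}-(-1)^{n-1}\bigr)$ differs from the second expression printed in the proposition, $\frac{t-1}{t}\bigl((t-1)^{n}-(-1)^{n-1}\bigr)$. The printed exponent $n$ is a typo: already for $n=3$ the printed closed form is not equal to $(t-1)(t-2)$, while the alternating sum $(t-1)^{n-1}-(t-1)^{n-2}+\ldots+(-1)^n(t-1)$ --- which is what you actually prove, and what the recurrence and the table confirm --- does equal $(t-1)(t-2)$ there and coincides with your closed form in general.
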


\begin{proof}
Since $\la'=(n-1,1)$, the hyperplanes of~$\A_\la$ are indexed by ${(n-1)}$\nobreakdash-sub\-sets of~$[n]$, or, passing to complements, by $j\in[n]$.
Observe that $\pi_{(2,1^{n-2})}|_{\S_{n-1}}=\pi_{(1^{n-1})}+\pi_{(2,1^{n-3})}$,
so the hyperplane~$H_j$ is exactly the space of the representation~$\pi_{(2,1^{n-3})}$ of the Young subgroup~$\S_{\widehat j}$, where we denote $\widehat j=[n]\setminus\{j\}$. It follows that for any $A\subset[n]$ with ${|A|=k\le n-2}$, the intersection $\cap_{j\in A}H_j$ is the $(n-k-1)$-dimensional space of the representation~$\pi_{(2,1^{n-k-2})}$ of the Young subgroup~$\S_{[n]\setminus A}$.

Consider the distinguished hyperplane $H_n$. Let $\A'=\A\setminus\{H_n\}$ and ${\A''=\A^{H_n}}$ be the restriction of $\A$ to $H_n$. Then it is not difficult to see from above that $\A''=\A_{(2,1^{n-3})}$, while $\A'$ is the Boolean arrangement in the $(n-1)$-dimensional space.  Since the characteristic polynomial~$\chi_{\A'}$ of the Boolean arrangement is equal to $(t-1)^{n-1}$, from the deletion-restriction theorem $\chi_{\A}(t)=\chi_{\A'}(t)-\chi_{\A''}(t)$ (see \cite[Corollary~2.57]{OT}) we obtain a recurrence relation $\chi_n(t)=(t-1)^n-\chi_{n-1}(t)$ for~$\chi_n$. Since, obviously, $\chi_2(t)=t-1$, the desired formula follows.
\end{proof}

\smallskip\noindent{\bf Example 3.} The table below shows the basic characteristics of the intrinsic arrangements~$\A_\la$ for all diagrams~$\la$ with $5$ cells (except the trivial cases of the identity and sign representations, where $V_\la$ is one-dimensional): the dimension~$\dim V_\la$ of the ambient space, the number~$\#\A_\la$ of hyperplanes, and the characteristic polynomial~$\chi_{\A_\la}(t)$.

\medskip
   \begin{tabular}{|c|c|c|c|}
      \hline
      $\la$ & $\dim V_\la$ & $\#\A_\la$ & $\chi_{\A_\la}(t)$\\
      \hline
      $(41)$ & 4 & 10& $(t-4)(t-3)(t-2)(t-1)$\\
      $(32)$ & 5 & 15 & $t^5-15t^4+90t^3-260t^2+350t-166$\\
      $(31^2)$ & 6& 10& $t^6-10t^5+45t^4-115t^3+175t^2-147t+51$\\
      $(21^3)$ & 4 & 5 & $t^4-5t^3+10t^2-10t+4$\\
\hline
   \end{tabular}

\section{The ``coordinate'' model of the intrinsic arrangement}\label{sec:tensor}

The intrinsic arrangement ${\cal A}_\la$ is defined for the irreducible representation $\pi_\la$ of $\S_n$. However, this representation has different realizations. By a~slight abuse of language, {\it we will denote by the symbol ${\cal A}_\la$ the corresponding arrangement in whatever space the representation $\pi_\la$ is realized}; this should not cause any ambiguity.

Let  $\la'=(1^{m_1}2^{m_2}\ldots n^{m_n})$, and let $W_{\la'}$ be the group given by~\eqref{wreath}. For each subgroup of the form $\S_k\wr\S_{m_k}$, consider the representation $\sgn_k\wr\id_{m_k}$ obtained by taking the sign representation in each factor $\S_k$ and the identity representation of $\S_{m_k}$. Consider the corresponding induced representation of $\S_n$:
\begin{equation}\label{wreathrep}
\xi_{\la'}=\Ind_{W_{\la'}}^{\S_n}\big((\sgn_1\wr\id_{m_1})\times(\sgn_2\wr\id_{m_2})\times\ldots\times(\sgn_n\wr\id_{m_n})\big).
\end{equation}
Observe that $\dim\xi_{\la'}=\#{\cal A}_\la$.

\begin{lemma}\label{multone}
The multiplicity of $\pi_\la$ in the decomposition of $\xi_{\la'}$ into irreducible representations is equal to~$1$.
\end{lemma}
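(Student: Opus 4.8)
The plan is to compute the multiplicity $\langle \xi_{\la'}, \pi_\la\rangle$ by combining Frobenius reciprocity with the theory of symmetric functions, exactly in the spirit of the proofs of Lemmas~\ref{l:kostka} and~\ref{l:hyper}. By reciprocity,
$$
\langle \xi_{\la'},\pi_\la\rangle=\big\langle \operatorname{Ind}_{W_{\la'}}^{\S_n}\big(\textstyle\prod_k(\sgn_k\wr\id_{m_k})\big),\,\pi_\la\big\rangle=\big\langle \textstyle\prod_k(\sgn_k\wr\id_{m_k}),\,\operatorname{Res}^{\S_n}_{W_{\la'}}\pi_\la\big\rangle,
$$
so the task reduces to identifying the Frobenius characteristic (symmetric function) associated with $\xi_{\la'}$ and pairing it against the Schur function~$s_\la$.

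First I would determine $\operatorname{ch}(\xi_{\la'})$ one wreath-product factor at a time. The key computation is the Frobenius image of $\operatorname{Ind}_{\S_k\wr\S_{m_k}}^{\S_{km_k}}(\sgn_k\wr\id_{m_k})$: inducing the sign representation up from $\S_k$ gives the elementary symmetric function $e_k$, and the outer induction by $\id_{m_k}$ corresponds on the level of symmetric functions to the plethysm $h_{m_k}[e_k]$. Since induction from a direct product corresponds to multiplication of characteristics, I obtain
$$
\operatorname{ch}(\xi_{\la'})=\prod_{k=1}^{n} h_{m_k}[e_k].
$$
Thus the multiplicity in question is the coefficient $\big\langle s_\la,\ \prod_k h_{m_k}[e_k]\big\rangle$, which equals the coefficient of $s_\la$ in the Schur expansion of this plethystic product.

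The cleanest way to show this coefficient is exactly~$1$ is to invoke a known branching/stability fact, or else to argue directly. The direct route I would take is to observe that $\prod_k h_{m_k}[e_k]$ is a product of complete homogeneous functions evaluated on elementary functions, and to apply the dual Pieri rule (the $e$-multiplication / vertical-strip rule) iteratively: each factor $e_k$ adds a vertical strip of size~$k$, and the $h_{m_k}[\,\cdot\,]$ structure enforces that these $m_k$ vertical strips of equal length~$k$ are added in a weakly ordered fashion. The partition $\la'$ is precisely the one whose parts are $k$ repeated $m_k$ times, so the unique way to build a diagram of content $\la'$ by stacking vertical strips of these prescribed lengths, subject to the semistandard ordering, yields exactly the conjugate diagram $\la$, with multiplicity one; the conjugation arises because vertical strips transpose to horizontal strips. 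Concretely this is the statement that $\langle s_{\la'},\prod_k h_{m_k}[h_k]\rangle$-type multiplicities specialize, under the $\om$ involution sending $e\leftrightarrow h$, to the single Kostka number $K_{\la',\la'}=1$ already used in Lemma~\ref{l:hyper}.

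The main obstacle I anticipate is handling the plethysm $h_{m_k}[e_k]$ rigorously rather than heuristically: plethysm with $h_{m_k}$ is not simply the $m_k$-th power of $e_k$, and its Schur expansion is genuinely nontrivial in general. The saving feature here is that I only need the \emph{single} coefficient of $s_\la$ (the maximal-dominance term), not the full expansion, and that term is forced to have coefficient one. I would make this precise either by the iterated dual Pieri argument sketched above—tracking that the lexicographically/dominance-largest shape producible is unique—or, more slickly, by reducing to Lemma~\ref{l:hyper}: the hyperplane $H_{\al_0}$ is one-dimensional as the $\sgn$-isotypic line for the distinguished Young subgroup, and the wreath structure of $W_{\la'}$ merely symmetrizes over the $m_k$ equal blocks without creating new copies of $\pi_\la$, so the multiplicity cannot exceed the value~$K_{\la',\la'}=1$ coming from the underlying Young subgroup while remaining at least~$1$ because $\pi_\la$ visibly occurs. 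This upper-bound-meets-lower-bound packaging is, I expect, the step requiring the most care.
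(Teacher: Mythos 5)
Your reduction of the multiplicity to the symmetric-function coefficient $\langle s_\la,\,\prod_k h_{m_k}[e_k]\rangle$ is correct: the Frobenius characteristic of $\Ind_{\S_k\wr\S_{m_k}}^{\S_{km_k}}(\sgn_k\wr\id_{m_k})$ is indeed the plethysm $h_{m_k}[e_k]$, and induction from a direct product multiplies characteristics. This is a genuinely different framing from the paper's proof, which never passes to plethysms: the paper gets the upper bound by showing $\xi_{\la'}\subset\eta_{\la'}=\Ind_{\S_{\la'}}^{\S_n}\sgn_{\la'}$ (because $\id_{m_k}\subset\Reg_{m_k}$ inside each wreath factor), and the lower bound by exhibiting $\pi_\la$ inside $\xi_{\la'}$ explicitly, checking that the symmetrized polytabloids $\sum_{\si\in C_t}\si(e_t)$ have equal coefficients on tabloids differing by a reordering of rows of equal length. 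Your upper bound is sound and is in fact the paper's mechanism in symmetric-function clothing: $h_{m_k}[e_k]$ is a Schur-positive summand of $e_k^{m_k}$ (this is exactly $\id_{m_k}\subset\Reg_{m_k}$), so the coefficient is at most $\langle s_\la,e_{\la'}\rangle=K_{\la'\la'}=1$.

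The gap is in the lower bound, i.e., in showing that $s_\la$ actually occurs, and neither of your two proposed finishes closes it. The ``iterated dual Pieri'' picture---that $h_{m_k}[e_k]$ is computed by adding $m_k$ vertical $k$-strips ``in a weakly ordered fashion''---is not a theorem; no such combinatorial rule for the Schur expansion of $h_m[e_k]$ exists (if it did, plethysm would be a solved problem), so it cannot be cited even for a single coefficient. (Your $\om$-involution remark is also off: $\om$ applied to $h_m[e_k]$ gives $h_m[h_k]$ only for $k$ even, and $e_m[h_k]$ for $k$ odd.) The fallback ``the multiplicity remains at least $1$ because $\pi_\la$ visibly occurs'' is circular: occurrence of $\pi_\la$ in $\eta_{\la'}$ is classical, but $\xi_{\la'}$ is in general a proper subrepresentation of $\eta_{\la'}$, and whether the copy of $\pi_\la$ survives inside it is precisely the nontrivial half of the lemma---the half the paper proves with the polytabloid computation. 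The fix within your framework is the leading-monomial argument you gesture at but do not carry out: every monomial of $h_{m_k}[e_k]$ has weight equal to a sum of $m_k$ indicator vectors of $k$-subsets of the variables, so every monomial weight of $\prod_k h_{m_k}[e_k]$ is $\trianglelefteq\la$ (using $\sum_{i\le j}\la_i=\sum_k m_k\min(k,j)$), and the weight $\la$ itself arises from exactly one choice of subsets, namely all equal to $\{1,\dots,k\}$; hence the monomial $x^\la$ has coefficient $1$, and unitriangularity of Kostka numbers forces the coefficient of $s_\la$ to be exactly $1$. With that argument spelled out, your route is complete and arguably shorter than the paper's.
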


\begin{proof}
Consider the Young subgroup $\S_{\la'}\subset\S_n$ corresponding to the diagram~$\la'$, and let $\eta_{\la'}=\Ind_{\S_{\la'}}^{\S_n}\sgn_{\la'}$ be the representation of $\S_n$ induced from the sign representation of $\S_{\la'}$. It is well known from the classical representation theory of symmetric groups that the multiplicity of $\pi_\la$ in $\eta_{\la'}$ is equal to~$1$.

Now, observe that we have a natural embedding $\S_{\la'}\subset W_{\la'}$ and denote $\Xi_{\la'}=\Ind_{\S_{\la'}}^{W_{\la'}}\sgn_{\la'}$. By the transitivity of induction, we have $\eta_{\la'}=\Ind_{W_{\la'}}^{\S_n}\Xi_{\la'}$.  It is not difficult to see that
$$
\Xi_{\la'}=(\sgn_1\wr\Reg_{m_1})\times(\sgn_2\wr\Reg_{m_2})\times\ldots\times(\sgn_n\wr\Reg_{m_n}),
$$
where $\Reg_k$ is the regular representation of $\S_k$. Comparing with~\eqref{wreathrep},
it follows that $\xi_{\la'}\subset\eta_{\la'}$, and hence the multiplicity of $\pi_\la$ in the decomposition of $\xi_{\la'}$ is at most~$1$.

It remains to prove that $\pi_\la$ is indeed contained in $\xi_{\la'}$. To this end, recall that the representation $\eta_{\la'}$ can be realized in the space $M^{\la'}$ spanned by the elements $e_t=\sum_{\si\in R_t}\sgn(\si)\si(t)$ where $t$ is a Young tableau of shape~$\la'$ and $R_t$ is the row stabilizer of~$t$ (and we mean the permutation action of~$\S_n$ on Young tableaux). The subrepresentation~$\pi_\la$ is spanned by the elements of the form
\begin{equation}\label{symmetrizer}
\sum_{\si\in C_t}\si(e_t)
\end{equation}
 where $C_t$ is the column stabilizer of~$t$. Now, observe that the subspace of~$M^{\la'}$ corresponding to~$\xi_{\la'}$ consists of the linear combinations of~$e_t$ such that if tableaux $t_1$ and $t_2$ differ only by the order of rows of equal length, then $e_t$ and $e_{t'}$ have equal coefficients. But it is easy to see that the elements~\eqref{symmetrizer} satisfy this property, hence $\pi_\la\subset\xi_{\la'}$.
\end{proof}

\smallskip\noindent{\bf Remark.} If $\la'$ has no rows of equal length greater than~$1$, then the group~$W_{\la'}$ coincides with the Young subgroup~$\S_{\la'}$ and the representation $\xi_{\la'}$ coincides with $\eta_{\la'}$. In this case, Lemma~\ref{multone} reduces to a key fact of the classical approach to the representation theory of symmetric group (the Frobenius--Young correspondence).

\begin{theorem}\label{th:tensor}
Let ${\cal B}_{\la}$ be the Boolean (coordinate) arrangement of hyperplanes in the space~$M^{\la'}$ of $\xi_{\la'}$. Then the restriction of ${\cal B}_{\la}$ to the subspace ${V_\la\subset M^{\la'}}$ of the irreducible representation $\pi_\la$ coincides with the intrinsic arrangement~${\cal A}_\la$.
\end{theorem}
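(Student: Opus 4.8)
The plan is to put explicit coordinates on $M^{\la'}$, reduce to the distinguished hyperplane by $\S_n$-equivariance, and then pin down the relevant normal vector using the multiplicity-one result of Lemma~\ref{multone}.

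First I would record the coordinate structure. Since each factor $\sgn_k\wr\id_{m_k}$ is one-dimensional, the inducing representation $\rho:=(\sgn_1\wr\id_{m_1})\times\ldots\times(\sgn_n\wr\id_{m_n})$ of $W_{\la'}$ is one-dimensional, so $\xi_{\la'}=\Ind_{W_{\la'}}^{\S_n}\rho$ acquires a basis $\{f_\al\}_{\al\in\P_n(\la')}$ indexed by the cosets $\S_n/W_{\la'}\cong\P_n(\la')$, on which $\S_n$ acts by permuting the lines $\C f_\al$ with signs, $\si f_\al=\pm f_{\si\al}$. Because $\rho$ is $\pm1$-valued, the induced inner product is $\S_n$-invariant and makes $\{f_\al\}$ orthonormal; the Boolean arrangement is then $\B_{\la}=\{B_\al\}$ with $B_\al=f_\al^{\perp}$, and $\si B_\al=B_{\si\al}$.

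The elementary fact I would use next is that, writing $P$ for the orthogonal projection $M^{\la'}\to V_\la$, one has $\langle v,f_\al\rangle=\langle v,Pf_\al\rangle$ for all $v\in V_\la$, so the normal (inside $V_\la$) to the restricted hyperplane $V_\la\cap B_\al$ is exactly $Pf_\al$, and this is nonzero precisely when $V_\la\not\subset B_\al$. Since $P$, the $\S_n$-action, and the families $\{f_\al\}$, $\{H_\al\}$, $\{n_\al\}$ are all $\S_n$-equivariant, it suffices to treat the distinguished partition $\al_0$, whose stabilizer is $W_{\la'}$, and then transport by any $\si$ with $\si\al_0=\al$. For $\al_0$ I would argue on the level of $W_{\la'}=\Stab(\al_0)$: the line $\C f_{\al_0}$ is the $\rho$-isotypic line of the coset basis, so $Pf_{\al_0}$ lands in the $\rho$-isotypic component of $\pi_\la|_{W_{\la'}}$, whose dimension is $\langle\pi_\la,\Ind_{W_{\la'}}^{\S_n}\rho\rangle=\langle\pi_\la,\xi_{\la'}\rangle=1$ by Frobenius reciprocity and Lemma~\ref{multone}. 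Hence $Pf_{\al_0}\ne0$, so $V_\la\cap B_{\al_0}$ is genuinely a hyperplane. To identify its normal with $n_{\al_0}$, I would restrict further to the base subgroup $\S_{\al_0}=\S_{\la'}\subset W_{\la'}$: there $\rho$ restricts to $\sgn_{\S_{\al_0}}$, so $Pf_{\al_0}$ transforms by $\sgn_{\S_{\al_0}}$ and therefore lies in the $\sgn_{\S_{\al_0}}$-isotypic subspace of $V_\la$, which by the proof of Lemma~\ref{l:hyper} (where $K_{\la',\la'}=1$) is precisely the line $H_{\al_0}^{\perp}=\C n_{\al_0}$. Thus $Pf_{\al_0}$ is a nonzero multiple of $n_{\al_0}$ and $V_\la\cap B_{\al_0}=n_{\al_0}^{\perp}=H_{\al_0}$; applying $\si$ yields $V_\la\cap B_\al=H_\al$ for every $\al$, which is the assertion.

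I expect the main obstacle to be not any single computation but the careful bookkeeping that fixes the transformation character of the coordinate normal $f_{\al_0}$ under $W_{\la'}$ and matches it to the sign-representation description of $n_{\al_0}$. Once the one-dimensionality of the $\rho$-isotypic component (from Lemma~\ref{multone}) and of the $\sgn_{\S_{\al_0}}$-isotypic component (from Lemma~\ref{l:hyper}) are both in hand, the identification of the normals and the non-containment $V_\la\not\subset B_\al$ come out together, so no separate degeneracy analysis is needed.
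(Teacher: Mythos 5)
Your strategy is essentially the paper's: reduce to the distinguished hyperplane by $\S_n$-equivariance, and identify the projected coordinate normal $Pf_{\al_0}$ with $n_{\al_0}$ via the multiplicity-one statement for $\sgn_{\S_{\al_0}}$ coming from Lemma~\ref{l:hyper}. That identification step is correct. But there is a genuine gap at the one point where the theorem has real content: the non-vanishing $Pf_{\al_0}\ne0$. You deduce it from the fact that $Pf_{\al_0}$ lies in the $\rho$-isotypic component of $\pi_\la|_{W_{\la'}}$, which is one-dimensional by Frobenius reciprocity and Lemma~\ref{multone}. That inference is invalid: the zero vector lies in every subspace, so knowing that $Pf_{\al_0}$ lands in a line says nothing about whether it vanishes. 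And this is not a removable technicality, since $Pf_{\al_0}\ne0$ is exactly equivalent to $V_\la\not\subset B_{\al_0}$, i.e., to the assertion that the coordinate hyperplane really cuts $V_\la$ in a hyperplane. Nor can one repair the argument by claiming that $\C f_{\al_0}$ is the whole $\rho$-isotypic component of $M^{\la'}|_{W_{\la'}}$ (which, combined with multiplicity one in $V_\la$, would force $f_{\al_0}\in V_\la$): that component is in general bigger. For instance, for $\la=(n-1,1)$ one has $M^{\la'}=\La^2(\C^n)$, $W_{\la'}\cong\S_2\times\S_{n-2}$, and besides $e_1\wedge e_2$ the vector $\sum_{j\ge3}(e_1\wedge e_j-e_2\wedge e_j)$ also transforms by $\rho$, so the $\rho$-isotypic component of the ambient space is two-dimensional.

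The missing idea --- and what the paper actually uses --- is cyclicity, which is available for free in your own setup: since $\S_n$ permutes the lines $\C f_\al$ transitively, $f_{\al_0}$ is a cyclic vector of $M^{\la'}$; hence $Pf_{\al_0}$ is a cyclic vector of $V_\la$ (equivalently, if $Pf_{\al_0}=0$ then $f_{\al_0}$ would lie in the proper $\S_n$-invariant subspace $V_\la^{\perp}$, contradicting cyclicity), and therefore $Pf_{\al_0}\ne0$ because $V_\la\ne0$. With that one sentence inserted, your proof closes and coincides in substance with the paper's: there, $w_0$ (your $f_{\al_0}$) is the tabloid of $\tau^{\max}_{\la'}$, it is observed to be cyclic so that $Pw_0\ne0$, and $Pw_0$ is then matched with $n_{\al_0}$ by the same sign-representation multiplicity-one argument you give, after which equivariance finishes the proof.
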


\begin{proof}
By Lemma~\ref{multone}, there is a unique subspace $V_\la\subset M^{\la'}$ such that the restriction of $\xi_{\la'}$ to $V_\la$ is isomorphic to $\pi_\la$. Denote by $P:M^{\la'}\to V_\la$ the orthogonal projection to $V_\la$; it is an operator commuting with the action of~$\S_n$. Now,  consider the distinguished tableau~$\tau^{\min}_\la$ and the conjugate tableau~$\tau^{\max}_{\la'}=(\tau^{\min}_\la)'$ of shape~$\la'$.
Let $w_0\in M^{\la'}$ be the tabloid corresponding to~$\tau^{\max}_{\la'}$. Note that $w_0$ is a cyclic vector of $M^{\la'}$ and, consequently, $Pw_0$ is a cyclic vector of $V_\la$, so $Pw_0\ne0$. Further, $w_0$ is the normal to one of the hyperplanes~$B_0$ of the coordinate arrangement~${\cal B}_{\la}$. Hence $Pw_0$ is the normal to $B_0\cap V_\la$. By construction, the distinguished Young subgroup $\S_{\al_0}$ acts on the vector $Pw_0$ as the sign representation. Since, by Lemma~\ref{l:hyper}, the multiplicity of the sign representation of~$\S_{\al_0}$ in $\pi_\la$ is equal to~$1$, it follows that $Pw_0$ coincides (up to a multiplicative constant) with $n_{\al_0}$ and, therefore, $B_0\cap V_\la=H_{\al_0}$, i.e., $B_0\cap V_\la\in{\cal A}_\la$. The rest follows from the fact that all the other hyperplanes both from ${\cal B}_{\la}$ and ${\cal A}_\la$ are obtained from $B_0$ and $H_{\al_0}$, respectively, by the action of~$\S_n$ and the projection~$P$ commutes with this action.
\end{proof}

\section{Representations corresponding to hook diagrams}\label{sec:hook}

Let $\la=(n-k,1^k)$ be an arbitrary hook diagram. Then $M^{\la'}=\La^{k+1}(\C^n)$ is the $(k+1)$th exterior power of~$\C^n$
(the space
 of totally antisymmetric tensors~$T$ of valence~$k+1$), the coordinate arrangement ${\cal B}_{(n-k,1^k)}$ consists of the ${n\choose{k+1}}$ ``coordinate'' hyperplanes $H_{I}:=\{T_{I}=0\}$,  where by $I=\{i_1,\ldots, i_{k+1}\}$ we denote a ${(k+1)}$\nobreakdash-subset of~$[n]$, assuming that $1\le i_1<i_2<\ldots<i_{k+1}\le n$, and the restriction of this arrangement to~$V_{(n-k,1^k)}$ is exactly the arrangement~${\cal A}_{(n-k,1^k)}$.

The representation of $\S_n$ in $\La^{k+1}(\C^n)$ decomposes into the sum of two irreducible representations isomorphic to $\pi_{(n-k,1^k)}$ and $\pi_{(n-k-1,1^{k+1})}$. Denote the corresponding subspaces by $V^{(k+1)}_{k}$ and $V^{(k+1)}_{k+1}$, respectively, so $\La^{k+1}(\C^n)=V^{(k+1)}_{k}\oplus V^{(k+1)}_{k+1}$.
Then the equivariant embedding $V^{(k)}_{k}\hookrightarrow V^{(k+1)}_{k}$ is given by the formula $V^{(k)}_{k}\ni\al\mapsto\partial\al\in V^{(k+1)}_{k}$ where $\partial:\La^{k}(\C^n)\to\La^{k+1}(\C^n)$ is the usual differential in the exterior algebra:
$$
(\partial T)_{i_1\ldots i_{k+1}}=\sum_{j=1}^{k+1}(-1)^{j+1}T_{i_1\ldots \widehat{i_j}\ldots i_{k+1}}
$$
(as usual, a hat over an index means that the index is omitted). The following theorem shows that the arrangement ${\cal A}_{(n-k,1^k)}$ can be essentially given by the set of equations $\partial T=0$.

\begin{theorem}\label{th:hook}
Consider the hyperplane arrangement ${\cal C}_{(n-k,1^k)}=\{H_{I}\}$ in $\La^{k}(\C^n)$  where
$$
H_{I}=\{T\in\La^{k}(\C^n):(\partial T)_{I}=0\}
$$
and $I$ ranges over all $(k+1)$-subsets of~$[n]$. Then
$$
{\cal C}_{(n-k,1^k)}={\cal A}_{(n-k,1^k)}\times\Phi_\ell
$$
where $\Phi_{\ell}$ is the empty arrangement in the $\ell$-dimensional space with $\ell=\dim\pi_{(n-k+1,1^{k-1})}={n-1\choose k-1}$.
\end{theorem}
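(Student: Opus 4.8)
The plan is to show that $\partial$ collapses the ``extra'' irreducible summand of $\La^k(\C^n)$ and identifies the surviving copy of $\pi_\la$ with the realization in which Theorem~\ref{th:tensor} already describes $\A_\la$ as a restriction of the Boolean arrangement. First I would record the decomposition into $\S_n$-irreducibles
$$
\La^k(\C^n)=V^{(k)}_{k-1}\oplus V^{(k)}_k,\qquad V^{(k)}_{k-1}\cong\pi_{(n-k+1,1^{k-1})},\quad V^{(k)}_k\cong\pi_{(n-k,1^k)}=V_\la,
$$
so that $W:=V^{(k)}_{k-1}$ has dimension $\ell=\binom{n-1}{k-1}$. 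Since $\partial$ is $\S_n$-equivariant and $\La^{k+1}(\C^n)\cong\pi_{(n-k,1^k)}\oplus\pi_{(n-k-1,1^{k+1})}$ contains no summand isomorphic to $\pi_{(n-k+1,1^{k-1})}$, Schur's lemma forces $\partial|_{W}=0$. On the other summand the excerpt already provides the equivariant embedding $V^{(k)}_k\ni\al\mapsto\partial\al\in V^{(k+1)}_k$; being a nonzero intertwiner of irreducibles, $\partial|_{V_\la}$ is an isomorphism onto $V^{(k+1)}_k$. Hence $\ker\partial=W$ and $\partial$ restricts to an $\S_n$-isomorphism $V_\la\xrightarrow{\sim}V^{(k+1)}_k$.

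Next I would analyze each hyperplane $H_I$. The defining functional $T\mapsto(\partial T)_I$ is the composition of $\partial$ with the coordinate functional $S\mapsto S_I$ on $\La^{k+1}(\C^n)$, so $H_I=\partial^{-1}(\widetilde H_I)$, where $\widetilde H_I=\{S:S_I=0\}$ is a coordinate hyperplane of $\mathcal B_\la$. Writing $T=w+v$ with $w\in W$ and $v\in V_\la$ and using $\partial w=0$, we get $(\partial T)_I=(\partial v)_I$, whence
$$
H_I=W\oplus\bigl\{v\in V_\la:(\partial v)_I=0\bigr\}=W\oplus(\partial|_{V_\la})^{-1}\bigl(\widetilde H_I\cap V^{(k+1)}_k\bigr).
$$
By Theorem~\ref{th:tensor}, $\widetilde H_I\cap V^{(k+1)}_k$ is exactly the hyperplane of $\A_\la$ (realized in $V^{(k+1)}_k$) indexed by the $(k+1)$-subset $I$; in particular the functional $S_I$ does not vanish on $V^{(k+1)}_k$, so each $H_I$ is a genuine hyperplane of $\La^k(\C^n)$ and all of them contain the common subspace $W$.

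Finally I would pull everything back along $\partial|_{V_\la}$. The intrinsic arrangement is defined purely in representation-theoretic terms, and is therefore carried by any $\S_n$-equivariant isomorphism of realizations of $\pi_\la$ to the intrinsic arrangement of the target; applying this to $\partial|_{V_\la}:V_\la\xrightarrow{\sim}V^{(k+1)}_k$ shows that the subspaces $(\partial|_{V_\la})^{-1}(\widetilde H_I\cap V^{(k+1)}_k)$ form precisely the arrangement $\A_\la$ in $V_\la$. Combined with the displayed identity $H_I=W\oplus(\cdots)$, this exhibits $\La^k(\C^n)=W\oplus V_\la$ as the product of the empty arrangement $\Phi_\ell$ on $W$ and $\A_\la$ on $V_\la$, i.e. $\mathcal C_{(n-k,1^k)}=\A_{(n-k,1^k)}\times\Phi_\ell$ with $\ell=\dim\pi_{(n-k+1,1^{k-1})}=\binom{n-1}{k-1}$.

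I expect the main obstacle to be this last identification step: one must argue cleanly that the arrangement obtained by transporting the restricted Boolean arrangement through the equivariant isomorphism $\partial|_{V_\la}$ is the canonical $\A_\la$, and not merely some linearly equivalent arrangement. This rests on the realization-independence of the intrinsic construction together with the equivariance of $\partial$; the bookkeeping that the index $I$ labels matching hyperplanes on both sides (via the type-$\la'$ partition whose unique $(k+1)$-block is $I$) is the place where care is needed. The verification that $\partial$ is genuinely $\S_n$-equivariant is a routine check from its explicit formula.
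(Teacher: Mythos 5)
Your proposal is correct and follows essentially the same route as the paper: the paper's (very terse) proof likewise uses the decomposition $\La^k(\C^n)=V^{(k)}_{k-1}\oplus V^{(k)}_k$ and the observation that each hyperplane of ${\cal C}_{(n-k,1^k)}$ is the direct sum of $V^{(k)}_{k-1}$ with a hyperplane of ${\cal A}_{(n-k,1^k)}$, the latter coming from Theorem~\ref{th:tensor} transported through the equivariant isomorphism $\partial|_{V^{(k)}_k}$. The only cosmetic difference is that the paper kills the summand $V^{(k)}_{k-1}$ via the identity $\partial^2=0$ (it is the image of $\partial$ from $\La^{k-1}(\C^n)$), whereas you invoke Schur's lemma; both justifications are valid.
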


\begin{proof}
Follows from the above considerations and the identity $\partial^2=0$. Namely, each hyperplane from ${\cal C}_{(n-k,1^k)}$ is the direct sum of a hyperplane from ${\cal A}_{(n-k,1^k)}$ and the space $V^{(k)}_{k-1}$.
\end{proof}

\medskip
This construction is a direct generalization of the case of the braid arrangement~$\Br_n$ corresponding to the standard representation of~$\S_n$. Namely, let $\la=(n-1,1)$, that is, $k=1$. Then $\La^{2}(\C^n)$ can be identified with the space~$M_n^{\rm skew}=\{(a_{ij})_{i,j}^n:a_{ij}=-a_{ji}\}$ of skew-symmetric $n\times n$ matrices. The subspace $V^{(2)}_{1}$ of the representation~$\pi_{(n-1,1)}$ is given by
$$
V^{(2)}_{1}=\{(a_{ij})_{i,j}^n: a_{ij}=\al_i-\al_j \mbox{ where } (\al_j)_{j=1}^n\in \C^n,\;\sum_{j=1}^n\al_j=0\}.
$$
The coordinate arrangement ${\cal B}_{(n-1,1)}$ in $M_n^{\rm skew}$ consists of the $\frac{n(n-1)}2$ hyperplanes $H_{ij}:=\{M\in M_n^{\rm skew}: a_{ij}=0\}$ for $1\le i<j\le n$, and the restriction of this arrangement to~$V^{(2)}_{1}$ is exactly the irreducible braid arrangement~$\widetilde{\Br}_n$. While the arrangement~${\cal C}_{(n-1,1)}$ in $\C^n$ consists of the hyperplanes of the form $\{\al_i-\al_j =0\}$, which is the standard definition of the braid arrangement~$\Br_n$. Thus, ${\cal C}_{(n-1,1)}=\Br_n$ and ${\cal B}_{(n-1,1)}=\widetilde{\Br_n}$.

Arnold's result  \cite{Arnold}  that the complement of
the braid arrangement is a~$K(\pi,1)$ space  ignited attempts to generalize it to a wider class
of reflection arrangements.
This generalization was proved in full by Bessis~\cite{Bessis}.
In the remaining part of this section, we will prove that the complements of our arrangements corresponding to hook diagrams with $k>1$ are never $K(\pi,1)$. In particular, in this case the group generated by all reflections
at the hyperplanes is infinite.

Suppose there exists a linear
dependence between the left-hand sides of several hyperplane equations, i.e., there exist nonzero
$a_1,\ldots, a_m\in\C$  and ${(k+1)}$-subsets $I_1,\ldots, I_m\subset[n]$
 such that
$\sum_{r=1}^ma_r(\partial T)_{I_r}=0$.
Here the vanishing of the left-hand side means that  the coefficients of the elements
with the same index sum to~$0$. Thus we can focus our attention on the indices $I_r$ only, which brings
an abstract simplicial complex into the picture.

The abstract complex we need is just the simplex $S$ on the set of vertices~$[n]$ (i.e., each
 subset of $[n]$ is a simplex of~$S$).
Consider the chain complex $C=C(S)$ on $S$ with coefficients from $\C$ and denote its
boundary map by~$d$. Then the above equality is equivalent to
$\sum_{r=1}^ma_rdI_r=d(\sum_{r=1}^ma_rI_r)=0$,
where $I_r$ is viewed as a simplex in the abstract complex and as a generator of the
respective chain complex. Thus,
$z=\sum_{r=1}^ma_rI_r$ is a cycle in the chain complex.
The following observation is obvious.

\begin{lemma}\label{l:cycle}
If a chain in a simplicial complex is a cycle of dimension greater than one,
then it cannot be a linear combination of less than $4$ natural generators (simplices).
\end{lemma}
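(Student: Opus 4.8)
The plan is to prove the statement by ruling out cycles supported on one, two, or three simplices, working over an arbitrary field of coefficients. Fix the dimension $p>1$ and suppose $z=\sum_{r=1}^m a_r I_r$ is a nonzero $p$-chain with all $a_r\neq 0$ and the $I_r$ distinct $p$-simplices (that is, distinct $(p+1)$-subsets of $[n]$). The single combinatorial fact that drives everything is this: \emph{two distinct $p$-simplices share at most one $(p-1)$-dimensional face}. Indeed, a common facet of $I$ and $J$ is a $p$-element subset of $I\cap J$, and since $I\neq J$ forces $|I\cap J|\le p$, there is at most one such subset, occurring exactly when $|I\cap J|=p$. I would isolate this as a preliminary observation, since all three cases reduce to it.

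Next I would analyze the boundary. Writing $dz=\sum_r a_r\, dI_r$ and collecting the coefficient of a fixed $(p-1)$-simplex $F$, one gets $\sum_{r:\,F\subset I_r}\pm a_r$. If $z$ is a cycle, this vanishes for every $F$; in particular no facet may belong to exactly one of the $I_r$, for then its coefficient would be a single nonzero $\pm a_r$. Hence every one of the $m(p+1)$ facet-incidences (each $p$-simplex has exactly $p+1$ facets) must land on a \emph{shared} facet, i.e.\ one contained in at least two of the $I_r$. This reformulation turns the problem into a counting estimate.

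The cases $m=1$ and $m=2$ are then immediate: a single simplex has $p+1\ge 3$ facets, none shared, so $dI_1\neq 0$; and for $m=2$ the two simplices share at most one facet, leaving at least $2(p+1)-2\ge 4$ unshared incidences, hence non-cancelling facets, so again $dz\neq 0$. The case $m=3$ is the crux, and I would settle it by counting. There are three pairs of simplices, each contributing at most one shared facet; and a facet shared by all three (if one exists) forces all three pairwise intersections to coincide, so it precludes any further shared facet. Consequently the number of facet-incidences landing on shared facets is at most $6$ (either at most $3$ distinct two-fold shared facets, giving $\le 6$ incidences, or a single three-fold one, giving $3$). But a cycle requires all $3(p+1)$ incidences to be shared, whence $3(p+1)\le 6$ and $p\le 1$, contradicting $p>1$. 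Therefore $m\ge 4$.

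The main obstacle is the bookkeeping in the case $m=3$: one must be sure that a triple-shared facet cannot coexist with additional pairwise-shared facets, so that the crude bound of $6$ shared incidences is genuinely an upper bound. This is exactly where the preliminary observation is used twice — once to cap each pair at one shared facet, and once to see that a common facet of all three collapses the configuration. Everything else is routine.
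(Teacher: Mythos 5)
Your proof is correct. Note that the paper offers no argument for this lemma at all---it is stated as an obvious observation---so there is no proof of record to compare against; what you have done is supply the missing justification, and it holds up. Your route is a clean incidence count: the key combinatorial fact that two distinct $p$-simplices (i.e., $(p+1)$-subsets) share at most one $(p-1)$-face; the reformulation of the cycle condition as ``every facet incidence lands on a shared facet,'' which is valid regardless of orientation signs because a facet lying in exactly one simplex of the support contributes a single nonzero term $\pm a_r$ to the boundary; and the case analysis $m=1,2,3$. The crux, $m=3$, is handled correctly: every shared facet must equal one of the three pairwise intersections, a facet shared by all three simplices forces those intersections to coincide and hence excludes any other shared facet, so at most $6$ incidences can be shared, against the required $3(p+1)\ge 9$. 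All steps check out, and your argument also makes transparent why the hypothesis of dimension greater than one is needed: for $p=1$ the bound $3(p+1)=6$ is attainable, as the boundary of a triangle shows.
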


Lemma~\ref{l:cycle} immediately implies the following.

\begin{lemma}\label{l:double}
The intersection of any two hyperplanes in
$\A_{(n-1,1^k)}$ for ${k>1}$
is double (i.e., there is no other hyperplane containing this intersection).
\end{lemma}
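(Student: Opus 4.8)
The plan is to move the question to the coordinate model ${\cal C}_{(n-k,1^k)}$ supplied by Theorem~\ref{th:hook} and then read the claim off from Lemma~\ref{l:cycle}. Since ${\cal C}_{(n-k,1^k)}={\cal A}_{(n-k,1^k)}\times\Phi_\ell$ and $\Phi_\ell$ is empty, the two arrangements have isomorphic intersection lattices; in particular a flat is contained in the same number of hyperplanes in either arrangement, so it suffices to show that the intersection of two distinct hyperplanes of ${\cal C}_{(n-k,1^k)}$ lies on no third hyperplane. First I would record the dictionary already set up in the preceding discussion: the defining functional of $H_I=\{(\partial T)_I=0\}$ is $f_I\colon T\mapsto\sum_{j}(-1)^{j+1}T_{I\setminus\{i_j\}}$, which under the natural pairing is exactly the boundary $dI$ of the $k$-simplex $I$ in the chain complex $C(S)$ of the full simplex $S$ on $[n]$. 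Consequently a linear relation $\sum_r a_r f_{I_r}=0$ among the normals is literally the assertion that $z=\sum_r a_r I_r$ is a cycle, $dz=0$.

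Next I would convert the geometric hypothesis into a cycle supported on few simplices. Fix two distinct hyperplanes $H_{I_1},H_{I_2}$ of ${\cal C}_{(n-k,1^k)}$; since $k>1$, each $I_r$ is a simplex of dimension $k\ge 2$. A third hyperplane $H_{I_3}$ with $I_3\ne I_1,I_2$ contains the codimension-two flat $H_{I_1}\cap H_{I_2}$ if and only if $f_{I_3}$ lies in the span of $f_{I_1}$ and $f_{I_2}$, i.e.\ $f_{I_3}=\alpha\,f_{I_1}+\beta\,f_{I_2}$ for some scalars. Since $f_{I_3}\ne0$ (the boundary of a $k$-simplex is nonzero), at least one of $\alpha,\beta$ is nonzero, and the resulting relation gives a nonzero cycle $z=-\alpha I_1-\beta I_2+I_3$ supported on at most three simplices.

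Finally I would invoke Lemma~\ref{l:cycle}: a nonzero cycle of dimension greater than one cannot be a linear combination of fewer than four simplices. As $k>1$, the cycle $z$ has dimension $k>1$, so a support of at most three is impossible, a contradiction. Hence no such $H_{I_3}$ exists, $H_{I_1}\cap H_{I_2}$ is double, and transferring back along the product decomposition yields the statement for ${\cal A}_{(n-k,1^k)}$. The same reasoning with two terms, ruling out a nonzero two-simplex cycle, shows in passing that distinct index sets give genuinely distinct hyperplanes, so the setup is nonvacuous. The only points demanding care — the mild obstacle here — are making the normal/boundary dictionary precise enough that linear dependence of the $f_{I_r}$ is exactly the cycle condition, and verifying that the relation produced by a containing hyperplane really involves $I_3$ with nonzero coefficient, so that the support count is genuinely below four and Lemma~\ref{l:cycle} applies.
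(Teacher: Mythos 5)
Your proof is correct and follows exactly the paper's route: the paper derives Lemma~\ref{l:double} as an immediate consequence of Lemma~\ref{l:cycle} via the same dictionary (linear dependence of the functionals $(\partial T)_{I_r}$ $\Leftrightarrow$ $\sum_r a_r I_r$ is a cycle), working with the equations $(\partial T)_I=0$ of the model ${\cal C}_{(n-k,1^k)}$ and transferring back through the product decomposition of Theorem~\ref{th:hook}. You have merely spelled out the details (the span criterion for a hyperplane containing a codimension-two flat, the nonvanishing of the resulting three-term cycle) that the paper leaves implicit in the word ``immediately.''
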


\begin{theorem}\label{th:hookcompl}
Let $\lambda=(n-k,1^k)$ with $k>1$. Then the fundamental group~$\pi_1$ of the
complement to the
arrangement $\A_{\lambda}$ is free Abelian of rank equal to the size of the arrangement.
 Besides, the complement is not $K(\pi,1)$.
 \end{theorem}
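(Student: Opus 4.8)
The plan is to combine the codimension-two genericity recorded in Lemma~\ref{l:double} with two standard facts from the theory of hyperplane arrangements: the meridian presentation of the fundamental group of an arrangement complement, and the bound on the cohomological dimension of that complement.

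For the first assertion, I would first note that $\A_\la$ is a \emph{central} arrangement (all the $H_\al$ are linear subspaces through the origin of $V_\la$), so any two distinct hyperplanes $H_\al,H_{\al'}$ meet in a flat of codimension exactly two, and by Lemma~\ref{l:double} this flat lies on no third hyperplane. Thus every rank-two flat of $\A_\la$ is a double point, i.e.\ $\A_\la$ is generic in codimension two, and its flats of rank $\le 2$ coincide, combinatorially, with those of a generic arrangement of $N:=\#\A_\la$ hyperplanes. Now $\pi_1$ of an arrangement complement admits a presentation (Randell/Arvola, or via the $2$-skeleton of the Salvetti complex) whose generators are the meridians $\ga_\al$, one per hyperplane, and whose relations are localized at the rank-two flats; at a double point the only relation is that the two meridians through it commute. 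Since every pair of hyperplanes produces such a flat, all the $\ga_\al$ commute pairwise, so $\pi_1(M(\A_\la))$ is abelian. Being abelian and generated by the $N$ meridians, it is a quotient of $\Z^N$; but $H_1(M(\A_\la);\Z)=\pi_1^{\mathrm{ab}}$ is free abelian of rank $N$ for any arrangement complement, whence $\pi_1(M(\A_\la))\cong\Z^N$, free abelian of rank $N=\#\A_\la$.

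For the second assertion I would argue by cohomological dimension. The complement $M(\A_\la)$ is an open subset of $V_\la\cong\C^\ell$ with $\ell=\dim V_\la=\binom{n-1}{k}$; as the complement of finitely many hyperplanes it is Stein, hence homotopy equivalent to a CW complex of real dimension at most $\ell$, so $H^i(M(\A_\la);\Z)=0$ for $i>\ell$. If $M(\A_\la)$ were a $K(\pi,1)$, then, using $\pi=\Z^N$ from the first part, its cohomology would agree with $H^*(\Z^N)=\La^*\Z^N$, which is nonzero in degree $N$; thus $N\le\ell$ would be forced. But
$$
N=\#\A_\la=\binom{n}{k+1}=\binom{n-1}{k}+\binom{n-1}{k+1}=\ell+\binom{n-1}{k+1}>\ell ,
$$
the last inequality because $1<k\le n-2$ makes $\binom{n-1}{k+1}>0$. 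This contradiction shows that $M(\A_\la)$ is not $K(\pi,1)$.

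I expect the delicate point to be the rigorous passage from \emph{all rank-two flats are double} to \emph{all meridians commute}. The clean way is to use that the fundamental group of an arrangement complement depends only on the combinatorics of flats of rank $\le 2$ (via the Zariski--van Kampen / braid monodromy presentation), so that $\A_\la$ has the same $\pi_1$ as a generic arrangement of $N$ hyperplanes, for which Hattori's description of the complement as the $\ell$-skeleton of the torus $T^N$ (with $\ell\ge 2$, which holds here since $k>1$ forces $\ell=\binom{n-1}{k}\ge 3$) gives $\pi_1\cong\Z^N$ directly. Everything else is bookkeeping: the counts $\#\A_\la=\binom{n}{k+1}$ and $\dim V_\la=\binom{n-1}{k}$, and the standard Stein/cohomological-dimension bound on the complement.
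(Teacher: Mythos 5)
Your proof of the first assertion is essentially the paper's own argument: the paper likewise invokes the presentation of $\pi_1$ of an arrangement complement with one generator per hyperplane and relations localized at the codimension-two flats, then uses Lemma~\ref{l:double} together with centrality (every pair of hyperplanes does meet) to conclude that the relations are exactly the pairwise commutation relations, giving the free abelian group of rank $N=\#\A_\la$; your extra step pinning down the rank via $H_1\cong{\mathbb Z}^N$ is a worthwhile addition. For the second assertion you take a slightly different but equally valid route: the paper argues that the rank of the intersection lattice is less than $N$, so the homology of the complement vanishes in degree $N$, contradicting the fact that a $K(\pi,1)$ with $\pi={\mathbb Z}^N$ is a homotopy $N$-torus; you get the same vanishing from the Stein/cohomological-dimension bound $H^i=0$ for $i>\ell=\dim_\C V_\la$. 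Your explicit inequality $N=\binom{n}{k+1}=\ell+\binom{n-1}{k+1}>\ell$ is in fact cleaner than the paper's passing remark that $N>n$ (which literally fails for $\la=(2,1^{n-2})$, where $N=n$, although the bound $\operatorname{rank}<N$ that the paper actually needs still holds, since the rank is at most $\ell=n-1$).

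One caution about the point you yourself flag as delicate. Your proposed ``clean fix''---that $\pi_1$ of an arrangement complement depends only on the poset of flats of rank at most two---is false in general: Rybnikov's example exhibits two complex line arrangements with isomorphic intersection lattices whose complements have non-isomorphic fundamental groups. The braid-monodromy (Zariski--van Kampen, Arvola/Randell) relations at a rank-two flat involve conjugation of the meridians by geometry-dependent words, so ``all rank-two flats are double'' does not formally yield pairwise commutation of the chosen generators without further work; the paper's own proof quietly elides the same issue. The airtight way to finish is: since $\A_\la$ is central and all rank-two flats are double, a generic two-dimensional section (which preserves $\pi_1$ by the Hamm--L\^e/Lefschetz-type theorem) is an arrangement of $N$ lines in $\C^2$ in general position---every two lines meet, no three concurrently---and for such general-position (nodal) arrangements abelianness of $\pi_1$ is classical (Zariski; or Hattori's theorem that the complement is homotopy equivalent to the $2$-skeleton of the $N$-torus; or Deligne--Fulton on nodal curves). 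With that substitution your argument is complete and, in its second half, somewhat more careful than the paper's.
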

\begin{proof}
By factoring out we can reduce the problem to an essential arrangement.

Recall from~\cite{OT} that the generators of $\pi_1$ are one for each hyperplane, and relations correspond to every intersection
of hyperplanes of codimension~$2$. More precisely, assume that $m$ hyperplanes have a common subspace~$J$ of codimension~$2$ and there are no other hyperplanes containing~$J$. Denote the generators corresponding to these $m$
hyperplanes by $a_1,a_2,\ldots,a_m$. Then the relations corresponding to~$J$ are
$$a_1a_2\cdots a_m=a_2a_3\cdots a_ma_1= \cdots=a_ma_1\cdots a_{m-1}.$$
As Lemma~\ref{l:double} implies, in our case for each codimension~$2$ subspace~$J$ we have $m\leq2$, and $m=2$ only for  $J$ that lies in two
hyperplanes. Thus, we have the relations
$ab=ba$ for any two generators $a$ and $b$. The group given by these generators and relations is the free Abelian group
of rank equal to the number of hyperplanes.

Now, if the complement were $K(\pi,1)$, then it would have been of homotopy type of a torus of dimension
$N={n \choose k+1}$. In particular, the homology of the complement must be nonzero in dimension $N$.
 On the other hand, $N>n$, therefore, the rank of the intersection lattice is less than $N$. Thus the homology of the complement
 is 0 in dimension $N$, whence its homotopy type is not that of a torus.
 \end{proof}

\smallskip\noindent {\bf Example.}
 Let $n=5$, $k=2$, and $\lambda=(3,1^2)$. There are $10={5 \choose 3}$
  hyperplanes, given in the space $\Lambda^2(\C^5)$ of dimension~$10$
  with  coordinates $t_{ij}$ by the
 following equations: for each triple $s=ijk$ ($i<j<k$), the equation $E(s)$ is
 $$t_{ij}-t_{ik}+t_{jk}=0.$$
 All the hyperplanes have the common subspace $V_{\mu}$ of dimension~$4$ where $\mu=(2,1^3)$. Factoring out the common subspace, we
  get an essential arrangement in $V_{\lambda}$ of dimension~$6$.
  The left-hand sides of the $10$ equations above have $5$ linear dependencies
corresponding to the standard basis $T_{ijkl}$ ($i<j<k<l$) of
$\Lambda^4(\C^5)$. For instance,
the relation corresponding to the basic element $T_{1234}$ is
$E(123)-E(124)+E(134)-E(234)=0.$

\section{Representations of the partition lattice}\label{sec:partlat}

Let $\Pi_n=(\P_n,\prec)$ be the partition lattice, i.e., the set of partitions of the set~$[n]$ ordered by refinement.  It is easy to see that
the set $\YS_n$ of all Young subgroups in~$\S_n$ ordered by inclusion is a lattice, and the map
$$
\psi:\Pi_n\to\YS_n,\qquad \psi(\al)=\S_\al,
$$
is a lattice isomorphism between $\YS_n$ and the partition lattice $\Pi_n$.

It is well known that the set of all subspaces of a vector space ordered by inclusion is also a lattice, where the greatest lower bound and the least upper bound are given by the intersection and the sum of subspaces, respectively.
Now, for a fixed Young diagram~$\lambda$ of size~$n$, denote by ${\cal S}(V_\la)$ the lattice of subspaces of~$V_\la$. We have a map $i_\la$ that associates with a subgroup $G\subset\S_n$ its invariant subspace
$V_\la^{G}$ in $V_\la$. Considering the composite $\phi_\la=i_\la\circ\psi$, we obtain a map
$$
\phi_\la:\Pi_n\to{\cal S}(V_\la)
$$
that sends a partition $\al$ of $[n]$ to the invariant subspace $V_\la^{\S_\al}$.

Given $\al\in\Pi_n$, denote by $\bar\al\in\Y_n$ the partition of the integer~$n$ determined by the sizes of blocks of $\al$.

\begin{proposition}\label{prop:prop} Fix $\la\in\Y_n$ and denote $\phi:=\phi_\la$. Then
\begin{enumerate}
\itemsep=-2mm
\item[\rm (i)] $\phi(\al\vee\beta)=\phi(\al)\wedge\phi(\beta)$;
\item[\rm (ii)] $\phi(\al\wedge\beta)\supset\phi(\al)\vee\phi(\beta)$;
\item[\rm (iii)] $\dim\phi(\al)=K_{\la\bar\al}$;
\item[\rm (iv)] $\phi(\al)=0$ unless $\la\trianglerighteq\bar\al$ (where $\trianglerighteq$ stands for the dominance order on partitions).
\end{enumerate}
\end{proposition}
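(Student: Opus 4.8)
The plan is to treat the four parts separately, exploiting throughout that $\phi=i_\la\circ\psi$ factors through the lattice isomorphism $\psi:\Pi_n\to\YS_n$, so that the partition-lattice operations translate into group-theoretic ones. Concretely, $\psi(\al\vee\beta)=\langle\S_\al,\S_\beta\rangle=\S_{\al\vee\beta}$: the subgroup generated by $\S_\al$ and $\S_\beta$ is again a Young subgroup, since the transpositions lying in $\S_\al$ together with those lying in $\S_\beta$ form a graph on $[n]$ whose connected components are exactly the blocks of $\al\vee\beta$, and the transpositions along the edges of a connected graph generate the full symmetric group on its vertices. Dually, $\psi(\al\wedge\beta)=\S_\al\cap\S_\beta=\S_{\al\wedge\beta}$. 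The one structural feature to keep in mind is that $\phi$ is order-reversing: refining a partition enlarges its invariant subspace, because a smaller Young subgroup imposes fewer invariance conditions.

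For (i) I would argue that a vector is fixed by $\langle\S_\al,\S_\beta\rangle$ if and only if it is fixed by each of the two generating subgroups, which is the elementary identity $V_\la^{\langle G,H\rangle}=V_\la^{G}\cap V_\la^{H}$. Combining this with $\S_{\al\vee\beta}=\langle\S_\al,\S_\beta\rangle$ and the fact that $\wedge$ in ${\cal S}(V_\la)$ is intersection yields $\phi(\al\vee\beta)=V_\la^{\S_\al}\cap V_\la^{\S_\beta}=\phi(\al)\wedge\phi(\beta)$ exactly. For (ii) the point is pure monotonicity, and it is instructive to see why equality is lost. Since $\S_{\al\wedge\beta}=\S_\al\cap\S_\beta$ is contained in each of $\S_\al$ and $\S_\beta$, its fixed space contains each of $V_\la^{\S_\al}$ and $V_\la^{\S_\beta}$, and hence contains their sum; as $\vee$ in ${\cal S}(V_\la)$ is the sum, this is precisely $\phi(\al\wedge\beta)\supset\phi(\al)\vee\phi(\beta)$. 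I would remark that equality generally fails: intersecting the groups is a genuinely weaker constraint than intersecting their fixed-point conditions, so $V_\la^{\S_\al\cap\S_\beta}$ can be strictly larger than $V_\la^{\S_\al}+V_\la^{\S_\beta}$.

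Parts (iii) and (iv) are essentially corollaries of Lemma~\ref{l:kostka}. Because $\S_\al$ is a Young subgroup of type $\bar\al$, Lemma~\ref{l:kostka} gives $\dim\phi(\al)=\dim V_\la^{\S_\al}=K_{\la\bar\al}$ directly, which is (iii). Then (iv) follows from the standard vanishing property of Kostka numbers, namely that $K_{\la\mu}\neq0$ if and only if $\la\trianglerighteq\mu$: if $\la\not\trianglerighteq\bar\al$ then $K_{\la\bar\al}=0$, so $\dim\phi(\al)=0$ and $\phi(\al)=0$.

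There is no real analytic obstacle anywhere here; the proposition is a clean bookkeeping exercise once the translation through $\psi$ is in place. The only points that require genuine care are verifying that $\psi$ carries the partition join to the \emph{generated} Young subgroup (the connected-graph argument above) and correctly tracking the order-reversing direction of $\phi$, so that the roles of meet and join are not inadvertently swapped. Conceptually, the one thing worth emphasizing is the asymmetry between the equality in (i) and the mere inclusion in (ii), which reflects precisely that $G\mapsto V_\la^G$ converts generated subgroups into intersections of fixed spaces exactly, but only converts intersections of subgroups into a lower bound for the join of fixed spaces.
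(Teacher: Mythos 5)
Your proposal is correct and follows essentially the same route as the paper: (i) via $V_\la^{\langle \S_\al,\S_\beta\rangle}=V_\la^{\S_\al}\cap V_\la^{\S_\beta}$ together with $\S_{\al\vee\beta}=\langle\S_\al,\S_\beta\rangle$, (ii) by monotonicity from $\S_{\al\wedge\beta}=\S_\al\cap\S_\beta$, and (iii)--(iv) from Lemma~\ref{l:kostka} and the vanishing (upper triangularity) of Kostka numbers. The only difference is that you spell out the connected-graph verification that the generated subgroup is again a Young subgroup, which the paper leaves implicit.
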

\begin{proof}
(i) Denote $V=V_\la$. We have $\phi(\al)\wedge\phi(\beta)=\phi(\al)\cap\phi(\beta)= {V^{\S_\al}\cap V^{\S_\beta}}=V^{\langle \S_\al,\S_\beta\rangle}=V^{\psi(\al\vee\beta)}=\phi(\al\vee\beta)$, where by $\langle \S_\al,\S_\beta\rangle$ we denote the subgroup in $\S_n$ generated by $\S_\al$ and $\S_\beta$, which is exactly the Young subgroup corresponding to $\al\vee\beta$.

(ii) We have $\phi(\al)\vee\phi(\beta)=V^{\S_\al}+V^{\S_\beta}\subset V^{\S_\al\cap\S_\beta}=V^{\S_{\al\wedge\beta}}=\phi(\al\wedge\beta)$.

(iii) Follows from Lemma~\ref{l:kostka}.

(iv) Follows from~(iii) and the upper triangularity of Kostka numbers (see, e.g.,~\cite[Section~I.6]{Mac}).
\end{proof}

\smallskip\noindent{\bf Remark.} As one can easily see, the equality in~(ii) does not hold in general.

\begin{corollary}
The Kostka numbers satisfy the following inequality: for every $\la\in\Y_n$ and any $\al,\beta\in\Pi_n$,
$$
K_{\la,\overline{\al\vee\beta}}+K_{\la,\overline{\al\wedge\beta}}\ge K_{\la,\bar\al}+K_{\la,\bar\beta}.
$$
\end{corollary}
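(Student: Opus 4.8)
The plan is to read off the inequality directly from the dimension formula in part~(iii) of Proposition~\ref{prop:prop} together with the two lattice relations (i) and (ii). The key observation is that both sides of the asserted inequality are dimensions of subspaces of~$V_\la$, and the subspaces in question are related by an inclusion coming from (i) and (ii). First I would rewrite each Kostka number as a dimension: by part~(iii), $K_{\la,\overline{\al\vee\beta}}=\dim\phi(\al\vee\beta)$, $K_{\la,\overline{\al\wedge\beta}}=\dim\phi(\al\wedge\beta)$, $K_{\la,\bar\al}=\dim\phi(\al)$, and $K_{\la,\bar\beta}=\dim\phi(\beta)$. So the claimed inequality is exactly
$$
\dim\phi(\al\vee\beta)+\dim\phi(\al\wedge\beta)\ge\dim\phi(\al)+\dim\phi(\beta).
$$

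Next I would set up the standard dimension identity for sums and intersections of subspaces. For any two subspaces $U,W$ of~$V_\la$ one has $\dim(U+W)+\dim(U\cap W)=\dim U+\dim W$. Applying this with $U=\phi(\al)$ and $W=\phi(\beta)$ gives
$$
\dim\big(\phi(\al)\vee\phi(\beta)\big)+\dim\big(\phi(\al)\wedge\phi(\beta)\big)=\dim\phi(\al)+\dim\phi(\beta).
$$
Now I invoke the two lattice relations. By part~(i), $\phi(\al)\wedge\phi(\beta)=\phi(\al\vee\beta)$, so the meet term on the left is exactly $\dim\phi(\al\vee\beta)$. By part~(ii), $\phi(\al)\vee\phi(\beta)\subset\phi(\al\wedge\beta)$, hence $\dim\big(\phi(\al)\vee\phi(\beta)\big)\le\dim\phi(\al\wedge\beta)$. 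Substituting these two facts into the identity, I replace the left-hand side with the larger quantity and the equal quantity respectively, obtaining
$$
\dim\phi(\al\wedge\beta)+\dim\phi(\al\vee\beta)\ge\dim\phi(\al)+\dim\phi(\beta),
$$
which is the desired inequality after translating back through part~(iii).

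There is essentially no obstacle here; the content of the corollary is entirely carried by Proposition~\ref{prop:prop}, and the argument is a one-line combination of the elementary dimension formula with the inclusion in~(ii). The only point requiring a moment of care is the direction of the inclusion in~(ii): it is the meet $\phi(\al\wedge\beta)$ that contains the join $\phi(\al)\vee\phi(\beta)$, so it is the join-of-subspaces term (not the meet-of-partitions term) that gets enlarged when passing from $\phi(\al)\vee\phi(\beta)$ to $\phi(\al\wedge\beta)$. Tracking this correctly is what makes the inequality point in the stated direction, and it is worth emphasizing that the inequality is genuinely an inequality rather than an equality precisely because (ii) is in general a strict inclusion, as noted in the remark following the proposition.
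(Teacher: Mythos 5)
Your proposal is correct and follows essentially the same argument as the paper: translate the Kostka numbers into dimensions via Proposition~(iii), apply the standard identity $\dim(U+W)+\dim(U\cap W)=\dim U+\dim W$ to $U=\phi(\al)$, $W=\phi(\beta)$, then use (i) to identify the meet term with $\dim\phi(\al\vee\beta)$ and the inclusion in (ii) to bound the join term by $\dim\phi(\al\wedge\beta)$. Your closing remark about the direction of the inclusion in (ii) being the source of the inequality is exactly the right point of care.
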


\begin{proof}
By Proposition~\ref{prop:prop}, the right-hand side is equal to $$\dim\phi(\al)+\dim\phi(\beta)=\dim(\phi(\al)\wedge\phi(\beta))+\dim(\phi(\al)\vee\phi(\beta)).$$ But $\dim(\phi(\al)\wedge\phi(\beta))=\dim\phi(\al\vee\beta)=K_{\la,\overline{\al\vee\beta}}$, while $$\dim(\phi(\al)\vee\phi(\beta))\le\dim\phi(\al\wedge\beta)=K_{\la,\overline{\al\wedge\beta}}.$$
\end{proof}

Thus, in the lattice ${\cal S}(V_\la)$ of all subspaces of~$V_\la$ we have the subset $\SY(V_\la)=\phi(\Pi_n)$ of subspaces invariant with respect to Young subgroups of~$\S_n$. It follows from Proposition~\ref{prop:prop} that the intersection $V_1\wedge V_2$ of two subspaces $V_1,V_2\in\SY(V_\la)$ also lies in $\SY(V_\la)$. However, their sum $V_1\vee V_2$ does not necessarily lie in $\SY(V_\la)$. Nevertheless, the following statement holds.

\begin{lemma}\label{l:lat}
The set $\SY(V_\la)$ of subspaces in $V_\la$ invariant with respect to Young subgroups of~$\S_n$ ordered by inclusion is a coatomistic lattice.
\end{lemma}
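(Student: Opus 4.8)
The plan is to establish the two assertions separately: first that $\SY(V_\la)$ is a lattice, and then that it is coatomistic.

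For the lattice structure I would observe that $\SY(V_\la)=\phi(\Pi_n)$ is a finite poset, that it contains the greatest element $V_\la=\phi(\hat 0)$ (the image of the finest partition, on which every Young subgroup acts as the identity only when it is trivial), and that it is closed under intersection: by Proposition~\ref{prop:prop}(i) we have $\phi(\al)\wedge\phi(\beta)=\phi(\al)\cap\phi(\beta)=\phi(\al\vee\beta)\in\SY(V_\la)$, so the set-theoretic intersection of two members of $\SY(V_\la)$ is again a member, and it is their meet. A finite meet-semilattice possessing a top element is automatically a lattice: for $V_1,V_2\in\SY(V_\la)$ the collection of their common upper bounds is finite and nonempty (it contains $V_\la$), so the intersection of that collection lies in $\SY(V_\la)$ and is the required join $V_1\vee V_2$. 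I would remark here that this join is in general strictly larger than the subspace sum $V_1+V_2$, which need not be invariant, consistent with the preceding discussion.

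For coatomism I would reduce everything to the $(+1)$-eigenspaces $V^\tau_\la$ of transpositions. By Lemma~1, if $T_\al$ is a set of transpositions generating $\S_\al$, then $\phi(\al)=V_\la^{\S_\al}=\bigcap_{\tau\in T_\al}V^\tau_\la$; thus every member of $\SY(V_\la)$ is an intersection of spaces of the form $V^\tau_\la$, the empty intersection yielding the top $V_\la$. Consequently it suffices to prove that each $V^\tau_\la$ is either equal to $V_\la$ or is a coatom, for then every element of $\SY(V_\la)$ is a meet of coatoms (members equal to $V_\la$ being neutral for the meet).

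The key step, which I expect to be the main point, is showing that a proper $V^\tau_\la$ is a coatom. I would argue by contradiction: suppose $W=V_\la^{\S_\beta}\in\SY(V_\la)$ satisfies $V^\tau_\la\subsetneq W\subsetneq V_\la$. Writing $\S_\beta=\langle\tau_1,\ldots,\tau_r\rangle$ (necessarily $r\ge 1$, since $W\ne V_\la$) and using Lemma~1 again, $W=\bigcap_i V^{\tau_i}_\la$, so in particular $V^\tau_\la\subseteq V^{\tau_i}_\la$ for every $i$. But all transpositions are conjugate in $\S_n$, so $\tau_i=g_i\tau g_i^{-1}$ gives $V^{\tau_i}_\la=\pi_\la(g_i)\,V^\tau_\la$ and hence $\dim V^{\tau_i}_\la=\dim V^\tau_\la$; an inclusion of equidimensional spaces is an equality, so $V^{\tau_i}_\la=V^\tau_\la$ for all $i$ and therefore $W=V^\tau_\la$, a contradiction. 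This shows there is no invariant subspace strictly between $V^\tau_\la$ and $V_\la$, i.e.\ $V^\tau_\la$ is a coatom whenever $V^\tau_\la\ne V_\la$; the degenerate cases $\la=(n)$ and $\la=(1^n)$, where $V_\la$ is one-dimensional, are checked directly. Combined with the reduction above, this proves that $\SY(V_\la)$ is coatomistic. The crux is precisely the observation that $V^\tau_\la$ cannot be enlarged inside $\SY(V_\la)$ without jumping to the whole space, and this follows purely from the equality of dimensions of the conjugate eigenspaces $V^{\tau_i}_\la$, which turns every inclusion among them into an equality.
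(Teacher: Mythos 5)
Your proof is correct and follows essentially the same route as the paper's: closure under intersection together with the top element $V_\la$ makes $\SY(V_\la)$ a lattice, and Lemma~1 exhibits every element as a meet of the transposition-invariant subspaces $V^\tau_\la$, which are the coatoms. The only difference is that where the paper dismisses the coatom property as ``obvious,'' you actually prove it via the conjugacy/equidimensionality argument --- a worthwhile filling-in of detail rather than a divergence.
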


\begin{proof}
As we have observed, the intersection of two subspaces from $\SY(V_\la)$ belongs to $\SY(V_\la)$, which implies that $\SY(V_\la)$ is a meet-semilattice. Besides, it contains the greatest element $\hat 1=V_\la=\phi(\eps)$ where $\eps$ is the partition into separate points: $\eps=\{\{1\},\{2\},\ldots,\{n\}\}$. But it is well known (see, e.g., \cite[Proposition~3.3.1]{St}) that a meet-semilattice with $\hat 1$ is a lattice.

Obviously, for every $\al\in\Pi_n$ with $\bar\al=(2,1^{n-2})$, the corresponding subspace $V_\la^{\S_\al}$ is a coatom of $\SY(V_\la)$. The fact that every element $V_\la^\beta$ of $\SY(V_\la)$ is the greatest lower bound of atoms follows from the considerations of Section~\ref{sec:first}: it suffices to take a collection of transpositions that generate $\S_\beta$; each transposition $(ij)$ gives rise to a partition $\al_{ij}\in\Pi_n$ with $\bar\al_{ij}=(2,1^{n-2})$ where $\al_{ij}$ consists of the 2-block $\{ij\}$ and $n-2$ singletons, and $V_\la^\beta$ is the greatest lower bound of the corresponding coatoms of~$\SY(V_\la)$.
\end{proof}

We emphasize that the meet of two elements in $\SY(V_\la)$ coincides with their meet in ${\cal S}(V_\la)$, but for the join this is, in general, not the case. So, $\SY(V_\la)$ is {\it not} a sublattice of ${\cal S}(V_\la)$.

Also, observe that, as we have mentioned in the proof of Lemma~\ref{l:lat}, the greatest element of $\SY(V_\la)$ is $\hat 1=V_\la$, while the smallest element $\hat0$ is the zero subspace $\{0\}$ for all $\la\ne(n)$.

Summarizing, we obtain the following.

\begin{theorem}
Denote by $\SY(V_\la)^*$ the order dual of $\SY(V_\la)$ (which is an atomistic lattice). Then the map $\phi:\Pi_n\to \SY(V_\la)^*$ is a join homomorphism of lattices.
\end{theorem}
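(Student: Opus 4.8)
The plan is to unpack the two assertions---that $\SY(V_\la)^*$ is an atomistic lattice, and that $\phi\colon\Pi_n\to\SY(V_\la)^*$ is a join homomorphism---and show that both reduce to facts already established, with order-dualization converting them into the desired form. First I would recall from Lemma~\ref{l:lat} that $\SY(V_\la)$ is a coatomistic lattice: every element is a meet of coatoms. The order dual $\SY(V_\la)^*$ reverses all order relations, so meets become joins, coatoms become atoms, and $\hat0$ and $\hat1$ swap roles. Thus ``coatomistic'' for $\SY(V_\la)$ translates directly to ``atomistic'' for $\SY(V_\la)^*$ (every element is a join of atoms), which justifies the parenthetical claim with no extra work. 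The main content is therefore the join-homomorphism statement, and the key observation is that the join operation in $\SY(V_\la)^*$ is precisely the meet operation $\wedge$ in $\SY(V_\la)$.

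With this identification in hand, proving that $\phi$ is a join homomorphism from $\Pi_n$ to $\SY(V_\la)^*$ amounts to verifying
$$
\phi(\al\vee\beta)=\phi(\al)\wedge\phi(\beta),
$$
where on the left the join $\vee$ is taken in $\Pi_n$ and on the right the meet $\wedge$ is taken in $\SY(V_\la)$ (equivalently, the join in the dual $\SY(V_\la)^*$). But this is exactly part~(i) of Proposition~\ref{prop:prop}, which I would simply cite. One subtle point worth flagging explicitly is that the meet in $\SY(V_\la)$ coincides with the honest subspace intersection $\cap$ in $\mathcal{S}(V_\la)$---this was emphasized in the remark following Lemma~\ref{l:lat}---so the $\wedge$ appearing in Proposition~\ref{prop:prop}(i) is genuinely the lattice meet of $\SY(V_\la)$ and not some larger meet computed in $\mathcal{S}(V_\la)$. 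This compatibility is what lets me pass cleanly between the two lattices.

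The one genuine subtlety---and the step I expect to require the most care---is the asymmetry between meet and join highlighted right before the theorem: $\SY(V_\la)$ is a meet-semilattice whose join is \emph{not} the subspace sum, because $\phi(\al)\vee\phi(\beta)$ (subspace sum) need not lie in $\SY(V_\la)$, and Proposition~\ref{prop:prop}(ii) gives only an inclusion, not an equality. A naive attempt to show $\phi$ preserves meets (i.e.\ $\phi(\al\wedge\beta)=\phi(\al)\vee\phi(\beta)$) would fail for exactly this reason. The point of passing to the order dual is to sidestep this entirely: the well-behaved operation is the meet of $\SY(V_\la)$, and dualizing turns ``$\phi$ sends $\Pi_n$-joins to $\SY(V_\la)$-meets'' into the statement ``$\phi$ is a join homomorphism into $\SY(V_\la)^*$.'' So the whole theorem is really an act of bookkeeping that packages Proposition~\ref{prop:prop}(i) and Lemma~\ref{l:lat} in the correct variance; the hard part is not the proof but the recognition that only the meet-direction is available, which is precisely why the order dual is introduced.
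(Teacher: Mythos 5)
Your proposal is correct and follows exactly the route the paper takes: the paper states this theorem as a summary (``Summarizing, we obtain the following'') of Lemma~\ref{l:lat} (coatomisticity, hence atomisticity of the dual) combined with Proposition~\ref{prop:prop}(i) and the observation that meets in $\SY(V_\la)$ agree with subspace intersections. Your careful flagging of why only the meet-direction works (Proposition~\ref{prop:prop}(ii) being a mere inclusion) matches the paper's own emphasis, so there is nothing to add.
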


If $\pi_{\rm nat}=\pi_{(n-1,1)}+\pi_{(n)}$ is the natural representation of~$\S_n$ in $\C^n$, then $\SY(V_{\rm nat})^*$ is exactly the intersection lattice of the braid arrangement $\Br_n$, which is well known to be isomorphic to the partition lattice~$\Pi_n$ (see, e.g,~\cite{OT}); thus, in this case $\phi=\phi_{\rm nat}$ is in fact an isomorphism of lattices.

\begin{theorem}\label{th:hyper}
The lattice $\SY(V_\la)^*$ is embedded into the intersection lattice $L(\A_\la)$ of the hyperplane arrangement~$\A_\la$, which is a minimal hyperplane arrangement satisfying this property.
\end{theorem}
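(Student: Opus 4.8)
The plan is to realize the embedding by the tautological inclusion of subspaces $\iota\colon V_\la^{\S_\al}\mapsto V_\la^{\S_\al}$ and to reduce the whole statement to one spanning assertion about the normals $n_\al$. Recall that in $\SY(V_\la)^*$ the join of two elements is their intersection as subspaces and the order is reverse inclusion, and the same holds in the intersection lattice $L(\A_\la)$. Hence, once I know that every invariant subspace $V_\la^{\S_\al}$ is a flat of $\A_\la$, the inclusion $\iota$ is automatically injective, order-reflecting, and join-preserving, i.e.\ a lattice embedding of the (join-)semilattice $\SY(V_\la)^*$ into $L(\A_\la)$; meet-preservation can then be checked separately via the fact that the flat-closure of a sum of two invariant subspaces is again invariant. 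So the first and main task is: \emph{every $V_\la^{\S_\al}\in\SY(V_\la)$ is an intersection of hyperplanes of $\A_\la$.}

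By the first lemma of Section~\ref{sec:first}, $V_\la^{\S_\al}=\bigcap_{\tau\in T}V_\la^\tau$ for any set $T$ of transpositions generating $\S_\al$, and an intersection of flats is a flat; thus it suffices to treat the atoms of $\SY(V_\la)^*$, namely the fixed spaces $V_\la^\tau$ of single transpositions $\tau=(ij)$. I claim
\[
V_\la^\tau=\bigcap_{\al}H_\al ,
\]
the intersection running over all $\al\in\P_n(\la')$ for which $i$ and $j$ lie in a common block. The inclusion $\subseteq$ is easy: if $i,j$ lie in one block of $\al$ then $\tau\in\S_\al$, so $\pi_\la(\tau)n_\al=\sgn(\tau)\,n_\al=-n_\al$; since every $v\in V_\la^\tau$ satisfies $\pi_\la(\tau)v=v$, unitarity gives $\langle n_\al,v\rangle=\langle\pi_\la(\tau)n_\al,\pi_\la(\tau)v\rangle=-\langle n_\al,v\rangle=0$, whence $V_\la^\tau\subseteq H_\al$.

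The reverse inclusion is the technical heart and, after passing to orthogonal complements, is the spanning statement
\[
(V_\la^\tau)^\perp=\operatorname{span}\{\,n_\al:\ i,j\ \text{in a common block of}\ \al\,\},
\]
i.e.\ the $(-1)$-eigenspace of $\pi_\la(\tau)$ is spanned by exactly those sign-normals $n_\al$ that are themselves $(-1)$-eigenvectors of $\tau$. I would prove this by restricting to $\S_{\{i,j\}}\times\S_{[n]\setminus\{i,j\}}\cong\S_2\times\S_{n-2}$: the right-hand side is visibly a submodule of $(V_\la^\tau)^\perp=\sgn_{\S_2}\boxtimes M$, and one must show it exhausts every $\S_{n-2}$-isotypic component of $M$, either by a branching/Pieri count or, more concretely, through the coordinate model of Theorem~\ref{th:tensor}, where the $n_\al$ appear as projections of the coordinate tabloids and the spanning becomes an explicit combinatorial identity (for hook shapes this is precisely the computation with the differential $\partial$ in Section~\ref{sec:hook}). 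This is the step I expect to be most delicate, since it is where the multiplicity-one property and the choice of the type $\la'$ genuinely enter.

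Finally, for minimality I would show that no proper subarrangement of $\A_\la$ retains the embedding property. It suffices to prove that each hyperplane $H_\al$ $(\al\in\P_n(\la'))$ is irredundant: there should exist an invariant subspace $V_\la^{\S_\ga}\in\SY(V_\la)$ with $V_\la^{\S_\ga}\subseteq H_\al$ whose defining intersection $\bigcap\{H_\beta:V_\la^{\S_\ga}\subseteq H_\beta\}$ truly needs $H_\al$, equivalently $n_\al$ is not in the span of the remaining normals $n_\beta$ containing $V_\la^{\S_\ga}$. Deleting $H_\al$ then strictly enlarges this intersection, so $V_\la^{\S_\ga}$ ceases to be a flat and the embedding is lost, forcing every valid arrangement to contain all of $\A_\la$. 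I would locate such a $\ga$ using the $\S_n$-orbit structure of $\{n_\al\}$ together with the multiplicity-one property, which singles out $n_\al$ as the unique sign-direction attached to $\S_\al$ and so makes it irrecoverable from the other normals compatible with $\al$. Establishing this linear-independence statement in full generality is the second main obstacle; the braid case $\la=(n-1,1)$, where $\SY(V_{\rm nat})^*\cong L(\Br_n)$ is already an isomorphism, is the degenerate instance in which it holds automatically.
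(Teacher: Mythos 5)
Your reduction of the embedding half is exactly the paper's own: $\SY(V_\la)^*$ is atomistic with atoms $V_\la^\tau$, intersections of flats are flats, so everything hinges on the identity~\eqref{atoms}, $V_\la^\tau=\bigcap_{\al\in\P_n(\la'):\,i\sim_\al j}H_\al$ for $\tau=(ij)$; and your proof of the inclusion $\subseteq$ (each $n_\al$ with $i\sim_\al j$ is a $(-1)$-eigenvector of $\tau$) is correct. But at the reverse inclusion --- the spanning statement $(V_\la^\tau)^\perp=\operatorname{span}\{n_\al:\ i\sim_\al j\}$ --- you stop: you name two candidate strategies (a branching count for $\S_2\times\S[{[n]\setminus\{i,j\}}]$, or the coordinate model of Theorem~\ref{th:tensor}) and defer the step as ``most delicate,'' without carrying either out. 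This is the entire content of the theorem's first assertion, and it does not follow from multiplicity-one bookkeeping alone: the right-hand side is a submodule of the $(-1)$-eigenspace of $\tau$, and one must actually show that it meets every isotypic component of that eigenspace nontrivially. The paper closes this gap with the Gelfand--Tsetlin basis: for $\si=(12)$ the relevant hyperplanes form the $\S_{\{3,\ldots,n\}}$-orbit of the distinguished hyperplane $H_{\al_0}$, so the orthocomplement of their intersection is the span of $\S_{\{3,\ldots,n\}}n_{\al_0}$; since $n_{\al_0}=e_{\tau^{\min}_\la}$, that span is the span of the Gelfand--Tsetlin vectors $e_t$ with $2$ in the first column of $t$, which is precisely $(V_\la^{(12)})^\perp$, the orthocomplement of the span of the $e_t$ with $2$ in the first row. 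Some such identification is indispensable; in your write-up it remains a conjecture. (Your side remark that meet-preservation would follow because ``the flat-closure of a sum of two invariant subspaces is again invariant'' is likewise unsubstantiated, though it is not needed for the statement as the paper understands it.)

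The minimality half has the same defect and, in addition, aims at a harder statement than the paper needs. You propose to prove irredundancy of each $H_\al$ by exhibiting an invariant subspace whose flat-closure genuinely requires $H_\al$, reducing this to a linear-independence property of the normals --- and then explicitly leave that property unproved (``the second main obstacle''). The paper's argument is different and avoids any such independence claim: by Definition~\ref{def:main}, each hyperplane $H_\al$ is itself the join $\bigvee_{\tau\in T_\al}V_\la^\tau$ (sum of subspaces) of elements of $\SY(V_\la)^*$, and minimality is derived directly from the fact that every hyperplane of $\A_\la$ is thus canonically generated by the lattice being embedded. So both of the steps that carry the actual mathematics are missing from your proposal: what you have is the correct skeleton --- for the embedding part, the paper's own skeleton --- but not a proof.
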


\begin{proof}
Recall that $\SY(V_\la)^*$ is an atomistic lattice, the atoms being  the subspaces~$V^\si_\la$ for all transpositions $\si\in\S_n$. Thus, to show that ${\SY(V_\la)^*\subset L(\A_\la)}$, it suffices to prove that every such atom can be obtained as the intersection of a collection of hyperplanes~$H_\al$. For $i,j\in[n]$ and $\al\in\P_n(\la')$, we write $i\sim_\al j$ if $i$ and $j$ lie in the same block of~$\al$.
Let us show that for every transposition~$\si=(ij)$,
\begin{equation}\label{atoms}
V^\si_\la=\bigcap_{\al\in\P_n(\la'):\;i\sim_\al j}H_\al.
\end{equation}
Obviously (since everything is invariant under~$\S_n$), it suffices to prove this for $\si=(12)$. Let $\al_0$ be the distinguished partition (see Sec.~\ref{sec:first}).
 Any other partition~$\al$ of type~$\la'$ such that $1\sim_\al 2$ is obtained from $\al_0$ by conjugation by some element $g\in\S_{\{3,\ldots,n\}}$, and then $H_\al=gH_{\al_0}$. It follows that the orthogonal complement to the right-hand side of~\eqref{atoms} is the subspace spanned by $\S_{\{3,\ldots,n\}}n_{\al_0}$. But this is exactly the subspace in~$V_\la$ spanned by the Gelfand--Tsetlin vectors $e_t$ indexed by Young tableaux~$t$ such that $2$ lies in the first column of~$t$, while $V^\si_\la$ is the subspace in~$V_\la$ spanned by the vectors $e_t$ indexed by Young tableaux~$t$ such that $2$ lies in the first row of~$t$. Thus, these are the orthogonal complements to each other, and the first assertion of the theorem follows.

 On the other hand, each hyperplane $H_\al$ of~$\A_\la$ is, by construction, obtained as the join of elements of~$\SY(V_\la)^*$, which implies the minimality.
\end{proof}

\end{document}